\newtheorem{theorem}{Theorem}[section]
\newtheorem{lemma}[theorem]{Lemma}
\newtheorem{corollary}{Corollary}[section]
\theoremstyle{definition}
\theoremstyle{remark}
\newtheorem{remark}[theorem]{Remark}
\numberwithin{equation}{section}
\subjclass[2020]{Primary  35B10, 35B35; Secondary  34C23, 35B32. }
\keywords{Fold-Hopf bifurcation, three-dimensional dynamical system, periodic traveling wave, FitzHugh-Nagumo system,  caricature calcium model. }
\date{\today}
\begin{document}

\title[Instability of periodic waves from fold-Hopf bifurcation]
{Instability of small-amplitude periodic waves  from\\ fold-Hopf bifurcation}

\maketitle

\vskip 1pt
\centerline{\scshape Shuang Chen}
\medskip
{\footnotesize
 \centerline{ School of Mathematics and Statistics, Central China Normal University}
   \centerline{ Wuhan, Hubei 430079, P. R. China}
   \centerline{ and }
   \centerline{ Center for Mathematical Sciences, Huazhong University of Sciences and Technology}
   \centerline{ Wuhan, Hubei 430074, P. R. China}
   \centerline{{\rm{Email}: schen@ccnu.edu.cn}}
} 

\medskip

\centerline{\scshape Jinqiao Duan\footnote{The corresponding author}}
\medskip

{\footnotesize
 \centerline{Department of Applied Mathematics, Illinois Institute of Technology}
   \centerline{Chicago, IL 60616, USA}
    \centerline{{\rm{Email}: duan@iit.edu}}
} 

\medskip

\begin{abstract}
We study the existence and stability of small-amplitude periodic waves emerging from fold-Hopf equilibria
in a system of one reaction-diffusion equation coupled with one ordinary differential equation.
This coupled system includes the FitzHugh-Nagumo system, caricature calcium models and other models in the real-world applications.
Based on the recent results on the averaging theory,
we solve periodic solutions in related three-dimensional systems
and then prove the existence of  periodic waves  arising from fold-Hopf bifurcations.
Numerical computation in [J. Tsai, W. Zhang, V. Kirk, and J. Sneyd, SIAM J. Appl. Dyn. Syst. 11 (2012), 1149--1199]
once suggested that the periodic waves from fold-Hopf bifurcations in a caricature calcium model
are spectrally unstable, yet without a proof.
After analyzing the linearization about periodic waves by the relatively bounded perturbation,
we prove the instability of small-amplitude periodic waves through
a perturbation of  the unstable spectra for the linearizations about the fold-Hopf equilibria.
As an application,
we prove the existence and stability of small-amplitude periodic waves from fold-Hopf bifurcations
in the FitzHugh-Nagumo system with an applied current.

\end{abstract}

\section{Introduction}
\label{sec-intr}

The excitable systems including the classical FitzHugh-Nagumo system \cite{FitzHugh-60,Nagumo-62}
have many different types of nonlinear waves,
such as asymptotically constant structures (e.g., pulses and fronts), spatially-periodic structures (e.g., wave trains).
These structures present wave propagation in excitable systems,
for example, traveling waves in  FitzHugh-Nagumo  system \cite{FitzHugh-60,Nagumo-62} indicate the propagation of nerve impulses in axons,
and in the caricature calcium models \cite{Keener-Sneyd-98} describe the changes of the free cytoplasmic calcium concentration and the
free calcium concentration in the endoplasmic reticulum.
In order to understand wave propagation well,
the study of nonlinear waves in the excitable systems has attracted much attention in the last decades.

As an important example of excitable systems,
the FitzHugh-Nagumo  system is a simplification of
the Hodgkin-Huxley model \cite{Hodgkin-Huxley-52} and has the form
\begin{eqnarray} \label{FHN-PDE-0}
\begin{aligned}
\frac{\partial u}{\partial t} &=d u_{xx}+h(u)-w+p,\\
\frac{\partial w}{\partial t} &=\delta(u-\gamma w),
\end{aligned}
\end{eqnarray}
where $u(x,t)$  is   the plasma membrane electric potential
and $w(x,t)$ indicates the combined inactivation effects of potassium and sodium ion channels.
We refer to Section \ref{sec-app} for more details on this model.
Since the FitzHugh-Nagumo system was proposed by FitzHugh \cite{FitzHugh-60} and Nagumo, Arimoto and Yoshizawa \cite{Nagumo-62},
many efforts have been devoted to understanding complex oscillations and traveling waves in this system
in the past tens of years.
See, for example, \cite{Cai-etal-19,Carter-Sandstede-15, Champneys-etal07,Chen-Hu-14,Cornwell-Jones-18,Czechowski-Piotr-16,Guck-Kuehn-09,Guck-Kuehn-10,Jones-89,Sch-Hupkes-19}
and references therein.

However,  not all nonlinear waves in biological systems can
be well understood by the FitzHugh-Nagumo system,
and many excitable systems also exhibit  different mechanisms giving  rise to spatially traveling waves.
More recently,
Tsai, Zhang, Kirk and Sneyd \cite{Tsai-etal-2012} studied another physiological excitable system of the form
\begin{eqnarray} \label{calcium-pde-0}
\begin{aligned}
\frac{\partial u}{\partial t} &=D u_{xx}+F(u)w-G(u),\\
\frac{\partial w}{\partial t} &=-\gamma (F(u)w-G(u)),
\end{aligned}
\end{eqnarray}
where
\begin{eqnarray*}
F(u)=\alpha+k\frac{u^{2}}{u^{2}+\varphi_{1}^{2}}\cdot \frac{\varphi_{2}}{u+\varphi_{2}},
\ \ \ \
G(u)=F(u)u+k_{s}u.
\end{eqnarray*}
This system is  a simplified model of calcium dynamics. Here
the system states $u(x,t)$ and $w(x,t)$ are the nondimensional concentration of
free cytoplasmic calcium and the nondimensional concentration of free calcium in the
endoplasmic reticulum, respectively.
We refer to \cite{Tsai-etal-2012} for the biological descriptions of the model parameters.

By applying the dynamical system approach and the Evans function,
Tsai, Zhang, Kirk and Sneyd \cite{Tsai-etal-2012} investigated the existence and stability of fronts and pulses.
After that, they  considered the conditions under which
periodic traveling  wave solutions (e.g., wave trains) arise from fold-Hopf equilibria,
at which the Jacobian matrix of the three-dimensional traveling wave system have one zero eigenvalue
and a pair of purely imaginary eigenvalues (see \cite{Guck-Holmes-83,Kuznetsov-98,Wiggins-03} or Section \ref{sec-pf-thm-exist} below).
They also numerically found that these small-amplitude periodic waves are spectrally unstable,
yet without providing a mathematically rigorous proof.
Furthermore, they found that the fold-Hopf bifurcation
(also called the Hopf-zero or Gavrilov-Guckenheimer bifurcation \cite{Guck-Holmes-83,Kuznetsov-98,Wiggins-03})
  is always subcritical for the calcium model \eqref{calcium-pde-0},
while for the FitzHugh-Nagumo system \eqref{FHN-PDE-0} the fold-Hopf bifurcation may be supercritical or subcritical.
This indicates that the FitzHugh-Nagumo system \eqref{FHN-PDE-0} and the simplified calcium model \eqref{calcium-pde-0}
exhibit  different mechanisms producing small-amplitude periodic traveling wave solutions.

The main contribution of this paper is to study the existence
and stability of periodic traveling wave solutions emerging from the fold-Hopf bifurcation.
More precisely, we investigate the small-amplitude periodic traveling waves from fold-Hopf bifurcations
in a general system of one reaction-diffusion equation coupled
with one ordinary differential equation
\begin{eqnarray} \label{PDE-ODE}
\begin{aligned}
\frac{\partial u}{\partial t} &=u_{xx}+f(u,w,\alpha),\\
\frac{\partial w}{\partial t} &=g(u,w,\alpha).
\end{aligned}
\end{eqnarray}
where $u(x,t)$ and $w(x,t)$ are system states, $x$ is a one-dimensional spatial variable, $t$ is time,
the parameter vector $\alpha$ is in $\mathbb{R}^{m}$ with $m\geq 1$,
and $f$ and $g$ are sufficiently smooth functions.
When the diffusion rate of $u$ in \eqref{PDE-ODE}  is a nonzero constant instead of one
(for example, in \eqref{FHN-PDE-0} and \eqref{calcium-pde-0}),
we can change it to be one by a simple rescaling.
The systems of the form \eqref{PDE-ODE} are widely used to understand the mechanisms
for various phenomena in numerous experiences.
Besides the FitzHugh-Nagumo systems with or without an applied current
\cite{Champneys-etal07,FitzHugh-60,Nagumo-62} and
caricature calcium models \cite{Tsai-etal-2012,Zhang-Sneyd-Tsai-14},
this  coupled system \eqref{PDE-ODE} also includes  consumer-resource models \cite{He-Lam-19,Zhang-etal-17},
predator-prey systems \cite{Dunbar-86,Murray-02} and so on.

Concerning the existence of periodic traveling waves in the coupled system \eqref{PDE-ODE},
it can be proved by detecting periodic orbits in its traveling wave system,
which is a three-dimensional (3D for short) system of ordinary differential equations.
However, the classical Poincar\'e-Bendixson Theorem for determining periodic orbits in planar systems
is not  applicable for the 3D systems.
This causes a major obstacle to the existence of periodic traveling waves for the coupled system \eqref{PDE-ODE}.
Some efforts have been made to overcome it.
For example, periodic solutions can be constructed by
the geometric singular perturbation theory \cite{Carpenter-77,Carter-Arnd-18,Soto-01},
 homoclinic bifurcation \cite{Sandstede-Scheel-01} and
the topological  methods \cite{Hastings-74,Maginu-80}.
These works obtained large-amplitude periodic traveling wave solutions.
It is worth mentioning that small-amplitude periodic solutions can be obtained by
perturbing the fold-Hopf equilibria.
As we know, 3D systems near fold-Hopf equilibria
 could exhibit complex dynamical behaviors by various perturbations.
In order to detect periodic solutions bifurcating from fold-Hopf equilibria,
we adopt the averaging theory.
It is one of efficient methods to solve  periodic solutions in nonautonomous differential systems
\cite{Guck-Holmes-83,Sanders-etal-07}.
The recent works such as \cite{Candido-Novaes-17,Candido-Novaes-20,Llibre-etal-14} successfully
applied it to prove the existence of  periodic orbits for 3D systems.
See Section \ref{sec-averag} for more information on the averaging theory.

The spectral stability of a traveling wave is determined by the spectrum of the linearization about it.
Regarding pulses and fronts in the coupled system \eqref{PDE-ODE},
the essential spectra for their linearizations are obtained by
analyzing the corresponding asymptotic operators (see \cite{Henry1981}),
and the point spectra are determined by the zeros of  the Evans function \cite{Alexander-Gardner-Jones-90,Evans-74,Jones-84,Kapitula-Promislow-13,Sandstede-02}.
A key feature of the periodic case is that the coefficients of the  linearization about a periodic wave are periodic,
instead of being asymptotically constant in the cases of fronts and pulses.
Then the spectrum of the linearization about a periodic wave in systems of the form \eqref{PDE-ODE}
is determined by the Floquet multipliers for a three-dimensional periodic system with a spectral parameter
(see  Section \ref{sec-spect}).
Unfortunately, it is not easy to obtain the Floquet multipliers for linear systems with periodic coefficients
(see, for instance, \cite{Doelman-Sandstede-09,Eszter-99,Gardner-93,Maginu-80,Sandstede-Scheel-01}),
even for the quite simple two-dimensional system such as the equivalent system of  Hill's equation.
This causes a big obstacle to analytically proving the instability of periodic traveling waves from fold-Hopf bifurcation
in the coupled system \eqref{PDE-ODE} in the present paper.

By the fold-Hopf bifurcation theory \cite{Guck-Holmes-83,Kuznetsov-98},
we observe that small-amplitude periodic solutions collapse to a fold-Hopf equilibrium as
the system parameters tend to some fixed values.
So we expect that the related periodic traveling waves could inherit the stability of the homogeneous rest state.
In order to prove it,
we adopt a spectral perturbation analysis for the related linearized operator.
The similar idea has been successfully applied to study the stability of permanent structures in partial differential equations
and the spectral problems arising in quantum mechanics.
See, for example, \cite{Alvarez-Plaza-21,Alvarez-Murillo-Plaza-22,Hislop-Sigal,Kato-95,Kollar-etal-19,Tri-Bernard-Kollar-18}.
We  describe the strategy for the detailed proof as follows.
We first transform the spectral problem for a periodic wave into the Sturm-Liouville problem with periodic boundary condition.
This makes the spectrum discrete.
Then we decompose it into a linear operator with constant coefficients and  a perturbation.
The key step to prove the instability is to check that
the perturbation is relatively bounded with respect to the linear operator with constant coefficients,
which can be viewed as the linearization about the fold-Hopf equilibrium (the zero-amplitude case).
This process involves some complicated calculations on  the relative bound of
the perturbation term with respect to the constant operator.
After giving the relation between the resolvent of the perturbed operator and that of the constant operator,
we prove a spectral perturbation result. This result helps us to prove
the spectral instability of small-amplitude periodic waves by using
 the spectrum for the zero-amplitude case (see Theorem \ref{thm-stab-PTW}).

We would like to point out that we rigorously prove
the instability of small-amplitude periodic traveling wave solutions
bifurcating from fold-Hopf equilibria.
This coincides with the numerical result in \cite{Tsai-etal-2012}.
By the results in Theorem \ref{thm-stab-PTW},
we also find that the instability of perturbed periodic waves does not depend on
whether the corresponding fold-Hopf bifurcation is subcritical or supercritical.
Both of these two mechanisms produce unstable periodic traveling wave solutions in the couple system \eqref{PDE-ODE}.
Consequently,
although the FitzHugh-Nagumo system \eqref{FHN-PDE-0} and the simplified calcium model \eqref{calcium-pde-0}
could undergo different types of fold-Hopf bifurcation,
the perturbed periodic waves have the same stability.

This paper is organized as follows.
We first prove the existence of small-amplitude periodic traveling wave solutions emerging from fold-Hopf equilibria via
the averaging theory in Section \ref{sec-pf-thm-exist}.
Based on the results on relatively bounded perturbation,
we prove the spectral instability of small-amplitude periodic traveling wave solutions in Section \ref{sec-instability}.
As an application,
we study the existence and stability of small-amplitude periodic traveling waves for
the FitzHugh-Nagumo system with an applied current in Section \ref{sec-app}.
We end with some concluding remarks in the final section.

\section{Existence of periodic traveling waves via averaging theory}
\label{sec-pf-thm-exist}

In this section, we study the existence of periodic traveling wave solutions bifurcating from fold-Hopf equilibria.
Before that, we first introduce some notions.
We call a solution $(u(x,t),w(x,t))$  of \eqref{PDE-ODE}  a \textit{traveling wave}
if there exists a real number $c$ and the function
$(\phi(\cdot),\psi(\cdot)): \mathbb{R}\to\mathbb{R}^{2}$ such that
\begin{eqnarray*}
(u(x,t),w(x,t))=(\phi(x+ct),\psi(x+ct)),
\end{eqnarray*}
where $c$ is the wave speed and $(\phi(\cdot),\psi(\cdot))$ is the wave profile.
Additionally, if $(\phi(\cdot),\psi(\cdot))$ is a periodic function with minimal period $A>0$,
that is,
\begin{eqnarray*}
(\phi(\xi+A),\psi(\xi+A))=(\phi(\xi),\psi(\xi)), \ \ \  \xi\in\mathbb{R},
\end{eqnarray*}
then $(u(x,t),w(x,t))=(\phi(x+ct),\psi(x+ct))$ is called a {\it periodic traveling wave} of \eqref{PDE-ODE}.
In case $c=0$ the traveling wave is called a {\it standing wave}.
Throughout this paper we focus on small-amplitude traveling waves
with nonzero wave speed in \eqref{PDE-ODE}.
In the moving coordinate frame $(\xi,t)=(x+ct,t)$,
a wave profile $(\phi(\cdot),\psi(\cdot))$ is a steady state of the following system
\begin{eqnarray} \label{eq-TW}
\begin{aligned}
u_{t}&=-cu_{\xi}+u_{\xi\xi}+f(u,w,\alpha),\\
w_{t}&=-cw_{\xi}+g(u,w,\alpha).
\end{aligned}
\end{eqnarray}
Set $v=u_{\xi}$. Then $(\phi,\phi_{\xi},\psi)$ is a solution of
a three-dimensional system of ordinary differential equations
\begin{eqnarray} \label{3D-SYSTEM}
\begin{aligned}
\frac{d u}{d \xi} &= \dot u  = v,
\\
\frac{d v}{d \xi} &= \dot v  = c v-f(u,w,\alpha),
\\
\frac{d w}{d \xi} &= \dot w  = \frac{1}{c} g(u,w,\alpha).
\end{aligned}
\end{eqnarray}
As a result, in order to prove the existence of periodic waves in the coupled system \eqref{PDE-ODE},
we only need to detect the periodic solutions in the 3D system \eqref{3D-SYSTEM}.

\subsection{Fold-Hopf equilibrium}
In this section, we recall that  an equilibrium of  the 3D system \eqref{3D-SYSTEM} is said to be
a {\it fold-Hopf equilibrium} if the Jacobian matrix for  the 3D system \eqref{3D-SYSTEM} at this point
has one zero eigenvalue $\lambda_{1}=0$ and a pair of purely imaginary eigenvalues $\lambda_{2,3}=\pm\, {\bf i}\, \mu_{0}$
with $\mu_{0}>0$ (see \cite[p.330]{Kuznetsov-98}),
where ${\bf i}=\sqrt{-1}$.
By perturbing fold-Hopf equilibria,
the 3D system \eqref{3D-SYSTEM} could undergo fold-Hopf bifurcations.
We refer to \cite{Guck-Holmes-83,Kuznetsov-98} for more information on the fold-Hopf bifurcation and fold-Hopf equilibrium.

In what follows,
we always use $^{T}$ to denote the transpose of a vector in the usual Euclidean inner product.
As a preparation, we first provide some conditions under which
the 3D system \eqref{3D-SYSTEM} has a fold-Hopf equilibrium.
More precisely, we have the following results.

\begin{lemma} \label{lm-FH-pt}
Assume that  $P_{0}:=(u_{0},v_{0},w_{0})^{T}$ in $\mathbb{R}^{3}$  is a fold-Hopf equilibrium
of \eqref{3D-SYSTEM} with $(\alpha,c)=(\alpha_{0},c_{0})$ and $c_{0}\neq 0$.
Let $\lambda_{1}=0$ and $\lambda_{2,3}=\pm\, {\bf i}\, \mu_{0}$ for some $\mu_{0}>0$
 be the eigenvalues of the Jacobian matrix $J(u_{0},v_{0},w_{0})$ at $P_{0}$.
Then the following conditions hold:
\begin{eqnarray}
&&v_{0}=0,\ \ \ f(u_{0},w_{0},\alpha_{0})=0,\ \ \ g(u_{0},w_{0},\alpha_{0})=0. \label{cond-1}\\
&&c_{0}+\frac{1}{c_{0}}g_{w}(P_{0})=f_{u}(P_{0})g_{w}(P_{0})-f_{w}(P_{0})g_{u}(P_{0})=0,\ \ \
f_{u}(P_{0})+g_{w}(P_{0})=\mu_{0}^{2}>0. \label{cond-2}
\end{eqnarray}
\end{lemma}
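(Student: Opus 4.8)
The plan is to convert the spectral hypotheses on the Jacobian matrix $J(u_0,v_0,w_0)$ at $P_0$ into the algebraic identities \eqref{cond-1}--\eqref{cond-2} by directly computing its characteristic polynomial and matching it against the polynomial whose roots are the prescribed eigenvalues. To obtain \eqref{cond-1}, I would use that $P_0$ is an equilibrium of \eqref{3D-SYSTEM} at $(\alpha,c)=(\alpha_0,c_0)$: the vanishing of the first component forces $v_0=0$; substituting this into the second component gives $f(u_0,w_0,\alpha_0)=0$; and since $c_0\neq 0$, the third component gives $g(u_0,w_0,\alpha_0)=0$.

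For \eqref{cond-2}, I would write down the Jacobian at $P_0$. Because $f$ and $g$ do not depend on $v$, it has the form
\[
J(P_0)=\begin{pmatrix} 0 & 1 & 0\\ -f_u(P_0) & c_0 & -f_w(P_0)\\ \tfrac{1}{c_0}\,g_u(P_0) & 0 & \tfrac{1}{c_0}\,g_w(P_0)\end{pmatrix}.
\]
Its characteristic polynomial is $\lambda^{3}-(\operatorname{tr}J(P_0))\,\lambda^{2}+\sigma_2(J(P_0))\,\lambda-\det J(P_0)$, where $\sigma_2$ denotes the sum of the three principal $2\times2$ minors. On the other hand, the prescribed eigenvalues $\lambda_1=0$ and $\lambda_{2,3}=\pm\,{\bf i}\,\mu_0$ force this polynomial to equal $\lambda(\lambda-{\bf i}\mu_0)(\lambda+{\bf i}\mu_0)=\lambda^{3}+\mu_0^{2}\lambda$. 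Comparing the three coefficients then yields exactly the relations in \eqref{cond-2}: the $\lambda^{2}$-coefficient gives $\operatorname{tr}J(P_0)=c_0+\tfrac{1}{c_0}g_w(P_0)=0$; the $\lambda$-coefficient gives $\sigma_2(J(P_0))=f_u(P_0)+g_w(P_0)=\mu_0^{2}$, which is positive because $\mu_0>0$; and the vanishing of the constant term gives $\det J(P_0)=\tfrac{1}{c_0}\big(f_u(P_0)g_w(P_0)-f_w(P_0)g_u(P_0)\big)=0$, hence $f_u(P_0)g_w(P_0)-f_w(P_0)g_u(P_0)=0$ since $c_0\neq 0$.

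This argument is essentially a bookkeeping exercise rather than a deep one, so I do not expect a serious obstacle. The only points needing care are the repeated use of the hypothesis $c_0\neq 0$ --- once to conclude $g(u_0,w_0,\alpha_0)=0$ in \eqref{cond-1}, and once to divide it out of the determinant relation --- and the correct identification of the principal $2\times2$ minors of $J(P_0)$; in particular, the minor indexed by rows and columns $\{1,3\}$ vanishes, which is precisely what collapses the $\lambda$-coefficient to $f_u(P_0)+g_w(P_0)$. As an alternative to invoking the trace/minor/determinant formula, one may simply expand $\det(\lambda I-J(P_0))$ along the first row and match it with $\lambda^{3}+\mu_0^{2}\lambda$ term by term, obtaining the same identities.
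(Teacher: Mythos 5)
Your proposal is correct and follows essentially the same route as the paper: both write down the same Jacobian at $P_{0}$, form its characteristic polynomial, and read off \eqref{cond-2} by matching it against the polynomial with roots $0,\pm\,{\bf i}\mu_{0}$ (the paper phrases this via the elementary symmetric functions $\sum\lambda_{j}$, $\sum\lambda_{i}\lambda_{j}$, $\prod\lambda_{j}$, which is the same coefficient comparison). Your treatment of \eqref{cond-1} is in fact slightly more explicit than the paper's, which simply asserts it from the equilibrium condition.
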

\begin{proof}
Since $P_{0}=(u_{0},v_{0},w_{0})^{T}$ is a fold-Hopf equilibrium  of \eqref{3D-SYSTEM} with $(\alpha,c)=(\alpha_{0},c_{0})$,
the condition \eqref{cond-1} holds.
It is clear that the Jacobian matrix $J(u_{0},v_{0},w_{0})$ of \eqref{3D-SYSTEM} at this point is in the form
\begin{equation*}
J(u_{0},v_{0},w_{0})=\left(
\begin{array}{ccc}
0 & 1 & 0\\
-f_{u}(P_{0}) & c_{0} & -f_{w}(P_{0})\\
\frac{1}{c_{0}}g_{u}(P_{0}) & 0 & \frac{1}{c_{0}}g_{w}(P_{0})
\end{array}
\right),
\end{equation*}
and the eigenvalues  of $J(u_{0},v_{0},w_{0})$ are determined by the zeros of its  characteristic polynomial
\begin{eqnarray*}
&&\lefteqn{{\rm det}(\lambda I_{3}-J(u_{0},v_{0},w_{0}))}\\
&&=\lambda^{3}-\left(c_{0}+\frac{1}{c_{0}}g_{w}(P_{0})\right)\lambda^{2}
   +\left(f_{u}(P_{0})+g_{w}(P_{0})\right)\lambda +\frac{1}{c_{0}}\left(f_{w}(P_{0})g_{u}(P_{0})-f_{u}(P_{0})g_{w}(P_{0})\right),
\end{eqnarray*}
where $I_{3}$ is the $3\times 3$ identity matrix.
By the definition of the fold-Hopf equilibrium,
we have
\begin{eqnarray*}
c_{0}+\frac{1}{c_{0}}g_{w}(P_{0})\!\!\!&=&\!\!\!\lambda_{1}+\lambda_{2}+\lambda_{3}=0,\\
f_{u}(P_{0})+g_{w}(P_{0})\!\!\!&=&\!\!\lambda_{1}\lambda_{2}+\lambda_{1}\lambda_{3}+\lambda_{2}\lambda_{3}=\mu_{0}^{2}>0, \\
\frac{1}{c_{0}}\left(f_{u}(P_{0})g_{w}(P_{0})-f_{w}(P_{0})g_{u}(P_{0})\right)\!\!\!&=&\!\!\!\lambda_{1}\lambda_{2}\lambda_{3}=0.
\end{eqnarray*}
Thus, \eqref{cond-2} holds. This finishes the proof.
\end{proof}

\subsection{Averaging theory}
\label{sec-averag}

In this section, we take advantage of the averaging theory to find
periodic solutions bifurcating from fold-Hopf equilibria in the 3D system \eqref{3D-SYSTEM}.
The averaging theory is one of the efficient methods to find periodic solutions in nonautomous differential systems
(see more details in \cite{Candido-Novaes-17,Guck-Holmes-83,Llibre-etal-14,Sanders-etal-07}).
In the following we introduce some results on the averaging theory.
Consider a  nonautonomous differential system in the form
\begin{eqnarray} \label{eq-average}
\dot X(t)=\epsilon F(t,X)+\epsilon^{2}K(t,X,\epsilon),
\end{eqnarray}
where $X=(x,y)^{T}$ is in $\mathbb{R}^{2}$, small parameter $\epsilon$ satisfies $0\leq \epsilon<|\epsilon_{1}|$
for a sufficiently small $\epsilon_{1}>0$,
and the functions $F$ and $K$ are sufficiently smooth and are $A$-periodic in the variable $t\in \mathbb{R}$.
Then \eqref{eq-average} can be defined in the extended phase space $\mathbb{S}^{1}\times \mathbb{R}^{2}$ by
setting $\dot t=1$ mod($A$).
Let $X(t,Z,\epsilon)$ denote the solution of \eqref{eq-average} with the initial value $X(0)=Z$ in $\mathbb{R}^{2}$.
Then by \cite[Lemma 5, p. 3570]{Candido-Novaes-17},
the solution  $X(t,Z,\epsilon)$ has the expansion
\begin{eqnarray*}
X(t,Z,\epsilon)=Z+\epsilon Y(t,Z)+O(\epsilon^{2}),
\end{eqnarray*}
where $Y$ is given by
\begin{eqnarray*}
Y(t,Z)=\int_{0}^{t}F(\tau,Z)d\tau.
\end{eqnarray*}
Consequently, the Poincar\'e map $\Pi(Z,\epsilon)=X(A,Z,\epsilon)$ is in the form
\begin{eqnarray*}
\Pi(Z,\epsilon)=Z+\epsilon G(Z)+O(\epsilon^{2}),
\end{eqnarray*}
where $G(Z)$ is given by
\begin{eqnarray*}
G(Z)=Y(A,Z)=\int_{0}^{A}F(\tau,Z)d\tau,
\end{eqnarray*}
and is called the {\it  averaged function of order one}.
Hence, in order to detect the periodic solutions in \eqref{eq-average},
we turn to solve the equation $\Pi(Z,\epsilon)=Z$.
By \cite[Theorem A, p. 566]{Llibre-etal-14} we have the next lemma.

\begin{lemma} \label{lm-averag-1}
Assume that $G\neq 0$, and there exists a point $Z_{0}$ in $\mathbb{R}^{2}$ such that
$G(Z_{0})=0$ and the differential  $DG(Z_{0})$ of $G(Z)$ is an invertible  matrix.
Then for a sufficiently small $|\epsilon|>0$,
the perturbed system \eqref{eq-average} has an isolated $A$-periodic solution $\varphi(t,\epsilon)$
satisfying that $\varphi(0,\epsilon)\to Z_{0}$ as $\epsilon\to0$.
\end{lemma}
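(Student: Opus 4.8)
The plan is to characterize the $A$-periodic solutions of \eqref{eq-average} as fixed points of the Poincar\'e map $\Pi(\cdot,\epsilon)$ and then to apply the implicit function theorem after factoring out the parameter $\epsilon$. Starting from the expansion $\Pi(Z,\epsilon)=Z+\epsilon G(Z)+O(\epsilon^{2})$ recalled above, the differentiable dependence of the solutions of \eqref{eq-average} on the initial data and on the parameter (valid on a neighborhood of $(Z_{0},0)$, where the unperturbed solution is the constant $Z$ and hence is defined on all of $[0,A]$) yields a sufficiently smooth remainder $R(Z,\epsilon)$ with
\begin{eqnarray*}
\Pi(Z,\epsilon)-Z=\epsilon\bigl(G(Z)+\epsilon R(Z,\epsilon)\bigr)
\end{eqnarray*}
for $(Z,\epsilon)$ near $(Z_{0},0)$. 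Therefore, for $\epsilon\neq0$ the fixed-point equation $\Pi(Z,\epsilon)=Z$ is equivalent to $H(Z,\epsilon):=G(Z)+\epsilon R(Z,\epsilon)=0$, and $H$ extends smoothly across $\epsilon=0$ with $H(\cdot,0)=G$.

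Next I would apply the implicit function theorem to $H$ at $(Z_{0},0)$. The map $H$ is smooth near $(Z_{0},0)$, it satisfies $H(Z_{0},0)=G(Z_{0})=0$ by hypothesis, and its partial differential in $Z$ at $(Z_{0},0)$ equals $DG(Z_{0})$, which is invertible by assumption. Hence there exist $\epsilon_{1}>0$ and a smooth curve $\epsilon\mapsto\zeta(\epsilon)$, defined for $|\epsilon|<\epsilon_{1}$, with $\zeta(0)=Z_{0}$ and $H(\zeta(\epsilon),\epsilon)=0$; moreover $\zeta(\epsilon)$ is the unique zero of $H(\cdot,\epsilon)$ in a fixed neighborhood of $Z_{0}$. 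For $0<|\epsilon|<\epsilon_{1}$ the point $\zeta(\epsilon)$ is then a fixed point of the Poincar\'e map, so $\varphi(t,\epsilon):=X(t,\zeta(\epsilon),\epsilon)$ is an $A$-periodic solution of \eqref{eq-average} with $\varphi(0,\epsilon)=\zeta(\epsilon)\to Z_{0}$ as $\epsilon\to0$. Its isolatedness follows from the uniqueness in the implicit function theorem: in the chosen neighborhood of $Z_{0}$, $\zeta(\epsilon)$ is the only initial condition producing a nearby $A$-periodic orbit.

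The step I expect to require the most care is precisely the passage from $\Pi(Z,\epsilon)-Z=\epsilon G(Z)+O(\epsilon^{2})$ to the \emph{smooth} factorization $\Pi(Z,\epsilon)-Z=\epsilon H(Z,\epsilon)$ with $H$ of class at least $C^{1}$ jointly in $(Z,\epsilon)$ \emph{including} $\epsilon=0$; only then is the implicit function theorem applicable. This is exactly the content of the expansion quoted from \cite[Lemma 5, p. 3570]{Candido-Novaes-17} combined with the standard theorems on differentiable dependence of ODE solutions on parameters, which ensure that the $O(\epsilon^{2})$ term is $\epsilon^{2}$ times a function bounded in $C^{1}$ uniformly near $(Z_{0},0)$. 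As an alternative that also dispenses with the invertibility hypothesis, one could replace the implicit function theorem by a Brouwer-degree argument: invertibility of $DG(Z_{0})$ makes the local degree of $G$ at $Z_{0}$ equal to $\pm1\neq0$, and homotopy invariance applied to $H(\cdot,\epsilon)$ on a small ball around $Z_{0}$ produces a zero for each small $\epsilon$, which is the route of \cite[Theorem A, p. 566]{Llibre-etal-14}. If in addition one wants $A$ to be the \emph{minimal} period, one notes that the unperturbed flow near $Z_{0}$ rotates with the nonzero angular velocity coming from the purely imaginary eigenvalues, so by continuity the minimal period cannot drop for small $|\epsilon|$; but for the statement as phrased this is not needed.
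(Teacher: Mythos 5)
Your argument is correct. Note first that the paper does not actually prove this lemma: it imports it wholesale as Theorem A of Llibre--Novaes--Teixeira \cite{Llibre-etal-14}, whose proof runs through Brouwer degree theory and requires only that the local degree $d_{B}(G,V,0)$ be nonzero on a neighborhood $V$ of $Z_{0}$ --- a strictly weaker hypothesis than invertibility of $DG(Z_{0})$. Your route is the classical implicit-function-theorem proof, which is self-contained and more elementary, and which is available precisely because the lemma as stated assumes the stronger condition $\det DG(Z_{0})\neq 0$. You correctly identified the one delicate point: the factorization $\Pi(Z,\epsilon)-Z=\epsilon H(Z,\epsilon)$ with $H$ smooth up to $\epsilon=0$, which follows from $\Pi(Z,0)=Z$ and the integral form of Taylor's theorem, $H(Z,\epsilon)=\int_{0}^{1}\partial_{\epsilon}\Pi(Z,s\epsilon)\,ds$, together with smooth dependence of solutions on initial data and parameters. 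Your approach buys slightly more than the cited theorem delivers --- a smooth branch $\epsilon\mapsto\varphi(0,\epsilon)$ and local uniqueness of the fixed point, hence isolatedness, directly from the uniqueness clause of the implicit function theorem --- whereas the degree-theoretic route buys generality (existence under a vanishing-free degree condition even when $DG(Z_{0})$ is singular), which this paper never needs since every application here verifies \eqref{eq-inv}. Your closing remarks on the degree-theoretic alternative and on minimality of the period are accurate and appropriately flagged as not required for the statement as given.
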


In the preceding lemma, we apply the averaged function of order one to find the periodic solutions in \eqref{eq-average}.
We also refer to \cite{Candido-Novaes-17,Candido-Novaes-20,Llibre-etal-14} for more recent results
on the applications of the averaged function of high order.

\subsection{Existence of periodic traveling waves}

Now we study the existence of small-amplitude periodic traveling waves emerging from
fold-Hopf equilibria via the averaging theory.

Without loss of generality,
we assume that for $(\alpha,c)=(\alpha_{0},c_{0})$,
the 3D system \eqref{3D-SYSTEM} has a fold-Hopf equilibrium at the origin, which is denoted by $0$ for short.
Let $X(u,w,v,\epsilon)$ denote the vector field defined  by \eqref{3D-SYSTEM} with $(\alpha,c)$ in the form
\begin{eqnarray} \label{eq-series}
\alpha=\alpha_{0}+\alpha_{1}(\epsilon):=\alpha_{0}+\sum_{k=1}^{+\infty}\tilde{\alpha}_{k}\epsilon^{k}, \ \ \ \
c=c_{0}+c_{1}(\epsilon):=c_{0}+\sum_{k=1}^{+\infty}\tilde{c}_{k}\epsilon^{k},
\end{eqnarray}
where  all coefficients will be fixed according to our need such that
the radii of convergence of these two series are nonzero.
It is clear that by a change
\begin{eqnarray} \label{df-change}
(u,v,w)^{T}\to \epsilon Q (u,v,w)^{T},\ \ \
Q=\left(
\begin{array}{ccc}
-\frac{1}{c_{0}}g_{w}(0,\alpha_{0}) & \mu_{0} & g_{w}(0,\alpha_{0})\\
-\mu_{0}^{2}  & c_{0}\mu_{0} & 0\\
\frac{1}{c_{0}}g_{u}(0,\alpha_{0}) & 0 & -g_{u}(0,\alpha_{0})
\end{array}
\right),
\end{eqnarray}
where  the matrix $Q$ consists of  the eigenvectors corresponding to
the Jacobian matrix  of \eqref{3D-SYSTEM} at the origin,
the 3D system \eqref{3D-SYSTEM} is transformed into
\begin{eqnarray} \label{3D-1}
\begin{aligned}
\dot u  &= \mu_{0} v+X_{1}(u,v,w,\epsilon),
\\
\dot v  &= -\mu_{0} u+X_{2}(u,v,w,\epsilon),
\\
\dot w  &= X_{3}(u,v,w,\epsilon),
\end{aligned}
\end{eqnarray}
where
\begin{eqnarray*}
\begin{aligned}
&(X_{1}(u,v,w,\epsilon),X_{2}(u,v,w,\epsilon),X_{3}(u,v,w,\epsilon))^{T}\\
&=\epsilon^{-1}Q^{-1}X(\epsilon (u,v,w)Q^{T},\epsilon)-(\mu_{0}v,-\mu_{0}u,0)^{T}.
\end{aligned}
\end{eqnarray*}
For sufficiently small $|\epsilon|$,
the expansion of $(X_{1}(u,v,w,\epsilon),X_{2}(u,v,w,\epsilon),X_{3}(u,v,w,\epsilon))^{T}$
with respect to $\epsilon$ can be written as
\begin{eqnarray*}
\begin{aligned}
&(X_{1}(u,v,w,\epsilon),X_{2}(u,v,w,\epsilon),X_{3}(u,v,w,\epsilon))^{T}\\
&=(X_{10}+\epsilon X_{11}(u,v,w),X_{20}+\epsilon X_{21}(u,v,w),X_{30}+\epsilon X_{31}(u,v,w))^{T}+O(\epsilon^{2}),
\end{aligned}
\end{eqnarray*}
where $X_{j1}(u,v,w)=O(|(u,v,w)|)$ for sufficiently small $|(u,v,w)|$.
Note that $X_{j0}$ and $X_{j1}$ depend on the matrix $Q$, which is not unique.
Then for simplicity, we omit their lengthy expressions, which are
determined by the first and second partial derivatives of the functions $f$ and $g$ with respect to $u$, $v$, $w$ and $\alpha$.
As a concrete example, we give the explicit expressions of $X_{j0}$ and $X_{j1}$
for the FitzHugh-Nagumo system in Section \ref{sec-exmp-thm}.
Consider system \eqref{3D-1} in the cylindrical coordinate
\begin{eqnarray*}
(u,v,w)=(r\cos\theta,r\sin\theta,w), \ \ \ \ \ r\geq 0,\ \ \ 2\pi \geq \theta\geq 0, \ \ \ w\in\mathbb{R}.
\end{eqnarray*}
Then  system \eqref{3D-1} is converted into
\begin{eqnarray} \label{3D-2}
\begin{aligned}
\dot r  &= X_{1} \cos\theta+X_{2}\sin\theta,
\\
r\dot \theta  &= -\mu_{0} r-X_{1} \sin\theta+X_{2}\cos\theta,
\\
\dot w  &= X_{3},
\end{aligned}
\end{eqnarray}
where we  write $X_{j}$  with $(r\cos\theta,r\sin\theta,w,\epsilon)$ being suppressed  for simplicity.
Then for sufficiently small $|r|$ and $|\epsilon|$,
\begin{eqnarray} \label{2D-1}
\begin{aligned}
\frac{d r}{d\theta} &=
      \frac{r(X_{1} \cos\theta+X_{2}\sin\theta)}{-\mu_{0} r-X_{1} \sin\theta+X_{2}\cos\theta},
\\
\frac{d w}{d\theta}  &=\frac{rX_{3}}{-\mu_{0} r-X_{1} \sin\theta+X_{2}\cos\theta}.
\end{aligned}
\end{eqnarray}
Clearly, the right-hand side of  system \eqref{2D-1} is $2\pi$-periodic in the variable $\theta\in \mathbb{R}$.
Then we have the following results.

\begin{theorem} \label{thm-existence} {\rm (Existence of periodic traveling waves)}
Define two functions $R_{1}$ and $R_{2}$ by
\begin{eqnarray*}
R_{1}(r,w)\!\!\!&=&\!\!\!\int_{0}^{2\pi}\left( \cos\theta\cdot X_{11}(r\cos\theta,r\sin\theta,w)
                 +\sin\theta\cdot X_{21}(r\cos\theta,r\sin\theta,w)\right) d\theta, \\
R_{2}(r,w)\!\!\!&=&\!\!\!\int_{0}^{2\pi}  X_{31}(r\cos\theta,r\sin\theta,w)\,d\theta.
\end{eqnarray*}
Assume that there exist $\alpha_{1}(\epsilon)$ and $c_{1}(\epsilon)$ defined by \eqref{eq-series}
such that $X_{10}=X_{20}=X_{30}=0$, $R_{1}(r_{*},w_{*})=R_{2}(r_{*},w_{*})=0$,
and
\begin{eqnarray} \label{eq-inv}
{\rm det}\left(
\begin{aligned}
\frac{\partial R_{1}}{\partial r} & \frac{\partial R_{1}}{\partial w}\\
\frac{\partial R_{2}}{\partial r} & \frac{\partial R_{2}}{\partial w}
\end{aligned}
\right)|_{(r_{*},w_{*})}\neq 0,
\end{eqnarray}
for some $r_{*}>0$ and $w_{*}\in\mathbb{R}$.
Then  there exists  a sufficiently small $\epsilon_{0}>0$
such that  a small-amplitude periodic wave $(\phi,\psi)$ with period
$A_{\epsilon}$ and wave speed $c_{0}+c_{1}(\epsilon)$ emerges from the origin
in the coupled system \eqref{PDE-ODE} with $\alpha=\alpha_{0}+\alpha_{1}(\epsilon)$
for each $\epsilon\in(0,\epsilon_{0}]$,
where
\begin{eqnarray*}
|(\phi,\psi)|=O(\epsilon),\ \ \ \ A_{\epsilon}=2\pi/\mu_{0}+O(\epsilon), \ \ \ \ \epsilon\in(0,\epsilon_{0}].
\end{eqnarray*}
\end{theorem}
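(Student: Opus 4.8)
The plan is to reduce the existence of a small-amplitude periodic orbit of the autonomous 3D system \eqref{3D-SYSTEM} near the fold-Hopf point to the first-order averaging Lemma \ref{lm-averag-1} applied to the non-autonomous planar system \eqref{2D-1}, and then to reassemble and rescale the resulting periodic solution into a periodic traveling wave of \eqref{PDE-ODE}.

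First I would record what the hypotheses $X_{10}=X_{20}=X_{30}=0$ buy us: with these, the $O(1)$-in-$\epsilon$ part of \eqref{3D-1} is exactly the rotation $(\mu_{0}v,-\mu_{0}u,0)^{T}$, so $X_{j}(u,v,w,\epsilon)=\epsilon X_{j1}(u,v,w)+O(\epsilon^{2})$ with $X_{j1}=O(|(u,v,w)|)$. In cylindrical coordinates one then has, along any orbit with $r$ bounded away from $0$, $\dot\theta=-\mu_{0}+O(\epsilon)\neq0$ for small $|\epsilon|$, so $\theta$ may be used as the independent variable and \eqref{2D-1} becomes a slow system
$$\frac{dZ}{d\theta}=\epsilon F(\theta,Z)+\epsilon^{2}K(\theta,Z,\epsilon),\qquad Z=(r,w)^{T},$$
with $F,K$ smooth and $2\pi$-periodic in $\theta$ on a neighborhood of $Z_{*}=(r_{*},w_{*})$ and
$$F(\theta,Z)=-\frac{1}{\mu_{0}}\Bigl(X_{11}(r\cos\theta,r\sin\theta,w)\cos\theta+X_{21}(r\cos\theta,r\sin\theta,w)\sin\theta,\ X_{31}(r\cos\theta,r\sin\theta,w)\Bigr)^{T}.$$
This is the setting \eqref{eq-average} with period $A=2\pi$. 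Integrating $F$ term by term, the averaged function of order one is $G(Z)=\int_{0}^{2\pi}F(\theta,Z)\,d\theta=-\mu_{0}^{-1}(R_{1}(r,w),R_{2}(r,w))^{T}$, so $R_{1}(r_{*},w_{*})=R_{2}(r_{*},w_{*})=0$ is exactly $G(Z_{*})=0$, and the non-degeneracy \eqref{eq-inv} is exactly the invertibility of $DG(Z_{*})$ (which also forces $G\not\equiv0$). Lemma \ref{lm-averag-1} then gives, for each sufficiently small $|\epsilon|>0$, an isolated $2\pi$-periodic solution $\varphi(\theta,\epsilon)=(r(\theta,\epsilon),w(\theta,\epsilon))$ of \eqref{2D-1} with $\varphi(0,\epsilon)\to Z_{*}$ as $\epsilon\to0$; since $r_{*}>0$, this solution stays in $r>0$ for small $\epsilon$, where the reduction is valid.

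It remains to translate back. The curve $\xi\mapsto(r\cos\theta,r\sin\theta,w)$ with $\theta=\theta(\xi)$ governed by $\dot\theta=-\mu_{0}-(X_{1}\sin\theta-X_{2}\cos\theta)/r$ closes up once $\theta$ has decreased by $2\pi$, since $r(\cdot,\epsilon)$ and $w(\cdot,\epsilon)$ are $2\pi$-periodic; as $\dot\theta$ is bounded away from $0$ and equals $-\mu_{0}+O(\epsilon)$ uniformly, the $\xi$-period is $A_{\epsilon}=\int_{0}^{2\pi}|\dot\theta|^{-1}\,d\theta=2\pi/\mu_{0}+O(\epsilon)$, and for small $\epsilon$ this is the minimal period because the angle sweeps exactly one full turn. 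Undoing the linear scaling \eqref{df-change} makes this a periodic orbit of \eqref{3D-SYSTEM} of amplitude $O(\epsilon)$ collapsing to the origin; since the first component of \eqref{3D-SYSTEM} reads $\dot u=v$, the orbit is automatically $(\phi,\phi_{\xi},\psi)$, and hence $(\phi(x+ct),\psi(x+ct))$ with $c=c_{0}+c_{1}(\epsilon)\neq0$ is a periodic traveling wave of \eqref{PDE-ODE} with the stated period and amplitude.

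The only genuinely delicate point is the uniform estimate that legitimizes viewing \eqref{2D-1} as a slow system in the exact form \eqref{eq-average}: one must check that the denominator $-\mu_{0}r-X_{1}\sin\theta+X_{2}\cos\theta$ in \eqref{2D-1} stays bounded away from zero, uniformly for $Z$ in a fixed neighborhood of $Z_{*}$, all $\theta$, and all small $\epsilon$, so that $\theta$ is a legitimate new time and $F,K$ inherit the smoothness and $2\pi$-periodicity demanded by Lemma \ref{lm-averag-1}. Everything else — the term-by-term identification $G=-\mu_{0}^{-1}(R_{1},R_{2})^{T}$, the period computation, undoing the scaling, and checking minimality of the period and $c\neq0$ — is routine bookkeeping.
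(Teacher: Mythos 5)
Your proposal is correct and follows essentially the same route as the paper: use $X_{10}=X_{20}=X_{30}=0$ to expand \eqref{2D-1} into the slow form \eqref{eq-average} with $\theta$ as time, identify the first-order averaged function with $-\mu_{0}^{-1}(R_{1},R_{2})^{T}$, apply Lemma \ref{lm-averag-1}, and recover the period from the $\dot\theta$ equation and the amplitude by undoing \eqref{df-change}. The uniform nonvanishing of the denominator near $r=r_{*}>0$, which you flag as the delicate point, is exactly what the paper uses implicitly in passing from \eqref{2D-1} to \eqref{2D-2}.
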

\begin{proof}
Note that  the periodic traveling waves of the coupled system \eqref{PDE-ODE}
correspond to the periodic solutions of the 3D system \eqref{3D-SYSTEM}.
Then it suffices to prove that
there exists  a sufficiently small $\epsilon_{0}>0$
such that  a small-amplitude periodic solution $(\phi,\phi_{\xi},\psi)$ with period
$A_{\epsilon}$ bifurcates from the origin
in the 3D system \eqref{3D-SYSTEM} with $(\alpha,c)=(\alpha_{0}+\alpha_{1}(\epsilon),c_{0}+c_{1}(\epsilon))$
for each $\epsilon\in(0,\epsilon_{0}]$,
where $|(\phi,\phi_{\xi},\psi)|=O(\epsilon)$ and $A_{\epsilon}=2\pi/\mu_{0}+O(\epsilon)$ for $\epsilon\in(0,\epsilon_{0}]$.

Since $X_{10}=X_{20}=X_{30}=0$, system \eqref{2D-1} has the expansion
\begin{eqnarray} \label{2D-2}
\begin{aligned}
\frac{d r}{d\theta} &= -\frac{\epsilon}{\mu_{0}}
      (\cos\theta X_{11}(r\cos\theta,r\sin\theta,w)+\sin\theta X_{21}(r\cos\theta,r\sin\theta,w))+O(\epsilon^{2}),
\\
\frac{d w}{d\theta}  &=-\frac{\epsilon}{\mu_{0}}X_{31}(r\cos\theta,r\sin\theta,w)+O(\epsilon^{2}).
\end{aligned}
\end{eqnarray}
Consequently,
by $R_{1}(r_{*},w_{*})=R_{2}(r_{*},w_{*})=0$, \eqref{eq-inv} and Lemma \ref{lm-averag-1},
we have that for sufficiently small $\epsilon$,
system \eqref{2D-2} has an isolated periodic solution $\varphi(\theta,\epsilon)$ of period $2\pi$ in $\theta$
such that $\varphi(0,\epsilon)\to (r_{*},w_{*})$ as $\epsilon\to 0$.
This together with \eqref{df-change} yields that
the 3D system \eqref{3D-SYSTEM} has a periodic solution $(\phi,\phi_{\xi},\psi)$
with $|(\phi,\phi_{\xi},\psi)|=O(\epsilon)$ for sufficiently small $|\epsilon|$.
By the second equation in \eqref{3D-2},
we have that for sufficiently small $|\epsilon|$,
\begin{eqnarray*}
A_{\epsilon}=|\int_{0}^{2\pi}\frac{r}{-\mu_{0} r-X_{1} \sin\theta+X_{2}\cos\theta}|_{(r,w)=\varphi(\theta,\epsilon)}\,d\theta|
=\frac{2\pi}{\mu_{0}}+O(\epsilon).
\end{eqnarray*}
Therefore, the proof is now complete.
\end{proof}

\section{Instability of periodic traveling waves via perturbation theory}
\label{sec-instability}

As for small-amplitude periodic traveling waves arising from the fold-Hopf bifurcations,
we are interested in the stability of this class of  periodic traveling waves in the coupled system \eqref{PDE-ODE}.
\subsection{Spectral theory for periodic traveling waves}
\label{sec-spect}

We first introduce basic notions on the spectral theory for
traveling waves in the coupled system \eqref{PDE-ODE}.
Assume that the coupled system \eqref{PDE-ODE} has a traveling wave $(\phi,\psi)$
with wave speed $c$.
The linearization of \eqref{eq-TW} about the traveling wave $(\phi,\psi)$ is in the form
\begin{eqnarray} \label{df-L}
\mathcal{L}
\left(
\begin{array}{c}
u \\
w
\end{array}
\right)
:= \left(
\begin{array}{c}
u_{\xi\xi}-cu_{\xi}+f_{u}(\phi,\psi,\alpha)u+f_{w}(\phi,\psi,\alpha)w \\
-cw_{\xi}+g_{u}(\phi,\psi,\alpha)u+g_{w}(\phi,\psi,\alpha)w
\end{array}
\right),
\end{eqnarray}
where
$$\mathcal{L}: H^{2}(\mathbb{R})\times H^{1}(\mathbb{R})\subset L^{2}(\mathbb{R})\times L^{2}(\mathbb{R})
\to L^{2}(\mathbb{R})\times L^{2}(\mathbb{R}).$$
The operator $\mathcal{L}$ is a closed operator on $L^{2}(\mathbb{R})\times L^{2}(\mathbb{R})$
with the domain $\mathcal{D}(\mathcal{L})=H^{2}(\mathbb{R})\times H^{1}(\mathbb{R})$ and
its eigenvalue problem is determined by
\begin{eqnarray*}
\mathcal{L}(u,w)^{T}=\lambda(u,w)^{T},
\end{eqnarray*}
where $(u,w)^{T}$ is in $\mathcal{D}(\mathcal{L})$, and $\lambda$ is in the complex plane $\mathbb{C}$.
Let $\rho(\mathcal{L})$ denote the {\it resolvent set} of $\mathcal{L}$,
consisting of all points $\lambda\in \mathbb{C}$ such that $\lambda I-\mathcal{L}$ is invertible
and $(\lambda I-\mathcal{L})^{-1}$ is a continuous linear operator,
where $I$ is the identity.
The complement of $\rho(\mathcal{L})$ in $\mathbb{C}$ is called the {\it spectrum} of  $\mathcal{L}$,
which is denoted by $\sigma(\mathcal{L})$.
Then
\begin{eqnarray*}
\mathbb{C}=\rho(\mathcal{L}) \cup \sigma(\mathcal{L}).
\end{eqnarray*}
If the dimension of the kernel ${\rm Ker}(\mathcal{L})$ for $\mathcal{L}$
and the codimension of the the range $\mathcal{R}(\mathcal{L})$ for $\mathcal{L}$
are both finite,
then the operator $\mathcal{L}$ is called a {\it Fredholm operator}  with the {\it Fredholm index}
$${\rm Ind}(\mathcal{L})={\rm dim}({\rm Ker}(\mathcal{L}))-{\rm codim}(\mathcal{R}(\mathcal{L})).$$
According to the definitions of the Fredholm operator and the Fredholm index,
the spectrum $\sigma(\mathcal{L})$ is classified into two different parts, that is,
$$\sigma(\mathcal{L})=\sigma_{\rm ess}(\mathcal{L}) \cup \sigma_{\rm pt}(\mathcal{L}),$$
where
$\sigma_{\rm ess}(\mathcal{L})$ (called the {\it essential spectrum})
consists of all $\lambda\in \mathbb{C}$ such that
$\lambda I-\mathcal{A}$ is not Fredholm or
$\lambda I-\mathcal{A}$ is Fredholm with ${\rm Ind}(\lambda I-\mathcal{L})\neq 0$,
and $\sigma_{\rm pt}(\mathcal{L})$ (called the {\it point spectrum})
consists of all $\lambda\in \mathbb{C}$ such that
$\lambda I-\mathcal{A}$  is not invertible and ${\rm Ind}(\lambda I-\mathcal{L})$ is zero.
Every $\lambda\in \sigma_{\rm pt}(\mathcal{L})$ is called an {\it eigenvalue} of $\mathcal{L}$ and
every nontrivial $(u,w)^{T}$ in $\mathcal{D}(\mathcal{L})$
satisfying $\mathcal{L}(u,w)^{T}=\lambda (u,w)^{T}$ is called
an {\it eigenfunction} corresponding to $\lambda$.

We next introduce the spectral projections
associated with $\sigma(\mathcal{L})$,
which are useful in the study of  the spectral perturbation problem (see Section \ref{sec-pert}).
Assume that  $\sigma(\mathcal{L})$ is separated into two disjoint parts $\sigma_{1}$ and $\sigma_{2}$
by a simple closed positively oriented cycle $\Gamma$,
which encloses a bounded open set containing  $\sigma_{1}$ in its interior and $\sigma_{2}$ in its exterior.
Then by \cite[Theorem 6.17, p.178]{Kato-95},
we can define the spectral projection $P_{\sigma_{1}}(\mathcal{L})$ associated with $\sigma_{1}$ by
the Dunford integral formula
\begin{eqnarray*}
P_{\sigma_{1}}(\mathcal{L})=\frac{1}{2\pi {\bf i}}\oint_{\Gamma}(\lambda I-\mathcal{L})\, d\lambda.
\end{eqnarray*}
In particular, if $\sigma_{1}$ only contains an isolated eigenvalue $\lambda_{0}$ of $\mathcal{L}$,
then the dimension of the range of $P_{\lambda_{0}}(\mathcal{L})$ is
called the {\it algebraic multiplicity of $\lambda_{0}$},
and the dimension of ${\rm Ker}(\lambda_{0} I-\mathcal{L}_{0})$
is called the {\it geometric multiplicity of $\lambda_{0}$}.
We refer to \cite{Kapitula-Promislow-13,Kato-95} for more details on the spectral theory of $\mathcal{L}$.
For notational convenience,
these notations are similarly defined for the linear operators which appear in the present paper.

Our goal is to study the stability of periodic traveling waves emerging from the fold-Hopf points.
For this reason, we give more details on the spectral theory for this class of periodic traveling waves.
Assume that $(\phi,\phi_{\xi},\psi)$ is a periodic solution with period $A_{\epsilon}$ arising from the fold-Hopf bifurcation
in the 3D system \eqref{3D-SYSTEM} with
$(\alpha,c)=(\alpha_{0}+\alpha_{1}(\epsilon),c_{0}+c_{1}(\epsilon))$ for $\epsilon\in(0,\epsilon_{0}]$,
where $\alpha_{1}$ and $c_{1}$ are defined as in \eqref{eq-series}.
Then $(\phi,\psi)$ is periodic wave  profile with period $A_{\epsilon}$ in system \eqref{PDE-ODE}.
So $(\phi(\cdot),\psi(\cdot))$ satisfies \eqref{eq-TW}.
The spectral stability of the profile $(\phi(\xi),\psi(\xi))$ is determined by the spectra of
the operator $\mathcal{L}$ in the form \eqref{df-L},
where all coefficients are periodic functions with period $A_{\epsilon}$.
Let $Y=(u,v,w)^{T}$ in $\mathbb{R}^{3}$.
Then the spectral problem  $\mathcal{L}(u,w)^{T}=\lambda (u,w)^{T}$ becomes a first-order system of ordinary differential equations
\begin{eqnarray} \label{eq-spect}
\frac{d Y}{d \xi}=A(\xi,\lambda)Y, \ \ \
A(\xi,\lambda)=\left(
\begin{array}{ccc}
0 & 1 & 0\\
\lambda-f_{u}(\phi,\psi,\alpha) & c & -f_{w}(\phi,\psi,\alpha)\\
\frac{1}{c}g_{u}(\phi,\psi,\alpha) & 0 & \frac{1}{c}(g_{w}(\phi,\psi,\alpha)-\lambda)
\end{array}
\right),
\end{eqnarray}
where $A(\xi+A_{\epsilon})=A(\xi,\lambda)$ for every $\lambda\in \mathbb{C}$.
We can check that the operator $\mathcal{L}$ has no point spectrum,
and $\lambda=\lambda_{0}$ is an essential spectrum of $\mathcal{L}$ if and only if
there exists a  $\mu$ in $(-\pi/A_{\epsilon},\pi/A_{\epsilon}]$ such that
\eqref{eq-spect} has a nonzero solution $Y$ satisfying
\begin{eqnarray*}
Y(A_{\epsilon})=e^{{\bf i}\mu A_{\epsilon}}Y(0).
\end{eqnarray*}
By the Bloch-wave decomposition
$$(u,w)\to (e^{{\bf i}\mu \xi}u,e^{{\bf i}\mu \xi}w),$$
the operator $\mathcal{L}$ is converted into
\begin{eqnarray} \label{L-mu}
\mathcal{L}_{\mu}
\left(
\begin{array}{c}
u \\
w
\end{array}
\right)
:= \left(
\begin{array}{c}
(\partial_{\xi}+{\bf i} \mu)^{2}u-c(\partial_{\xi}+{\bf i} \mu)u+f_{u}(\phi,\psi,\alpha)u+f_{w}(\phi,\psi,\alpha)w \\
-c(\partial_{\xi}+{\bf i} \mu)w+g_{u}(\phi,\psi,\alpha)u+g_{w}(\phi,\psi,\alpha)w
\end{array}
\right),
\end{eqnarray}
where
\begin{eqnarray*}
\mathcal{L}_{\mu}:
L^{2}_{\rm per}([0,A_{\epsilon}],\mathbb{C})\times L^{2}_{\rm per}([0,A_{\epsilon}],\mathbb{C})
\to L^{2}_{\rm per}([0,A_{\epsilon}],\mathbb{C})\times L^{2}_{\rm per}([0,A_{\epsilon}],\mathbb{C})
\end{eqnarray*}
with the domain $\mathcal{D}(\mathcal{L}_{\mu})=H^{2}_{\rm per}([0,A_{\epsilon}],\mathbb{C})\times H^{1}_{\rm per}([0,A_{\epsilon}],\mathbb{C})$.
Then  for each $\mu$ in  $(-\pi/A_{\epsilon},\pi/A_{\epsilon}]$,
the point spectrum of $\mathcal{L}_{\mu}$ is an essential spectrum of $\mathcal{L}$.
We refer to \cite[pp.\,68-70]{Kapitula-Promislow-13} for more details on
the relation between the spectra of $\mathcal{L}$ and those of $\mathcal{L}_{\mu}$ for $\mu$ in  $(-\pi/A_{\epsilon},\pi/A_{\epsilon}]$.
We summarize the above statements as follows:
\begin{lemma}
The spectrum $\sigma(\mathcal{L})$ of the operator $\mathcal{L}$
for periodic traveling wave $(\phi,\psi)$ in \eqref{df-L}
is in the form
\begin{eqnarray} \label{spect-decomp}
\sigma(\mathcal{L})=\bigcup_{-\frac{\pi}{A_{\epsilon}}<\mu\leq \frac{\pi}{A_{\epsilon}}}\sigma_{\rm pt}(\mathcal{L}_{\mu}).
\end{eqnarray}
\end{lemma}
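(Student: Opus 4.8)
The plan is to combine standard Floquet (Bloch-wave) theory with the exponential-dichotomy characterization of the essential spectrum, exploiting the set-up already recorded before the lemma. First I would pass from the eigenvalue problem $\mathcal{L}(u,w)^{T}=\lambda(u,w)^{T}$ to the first-order periodic system \eqref{eq-spect}, writing $\Phi(\xi,\lambda)$ for its principal fundamental matrix solution (so $\Phi(0,\lambda)=I_{3}$) and $M(\lambda)=\Phi(A_{\epsilon},\lambda)$ for the associated monodromy matrix. Since $A(\xi,\lambda)$ is $A_{\epsilon}$-periodic in $\xi$ and entire in $\lambda$, the map $\lambda\mapsto M(\lambda)$ is entire, and by Floquet theory every solution of \eqref{eq-spect} is a finite linear combination of Bloch-type solutions $e^{\nu\xi}p(\xi)$ with $p(\cdot)$ being $A_{\epsilon}$-periodic, where the numbers $e^{\nu A_{\epsilon}}$ run over the Floquet multipliers, i.e. the eigenvalues of $M(\lambda)$.

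Next I would confirm that $\mathcal{L}$ has empty point spectrum: an eigenfunction $(u,w)^{T}\in H^{2}(\mathbb{R})\times H^{1}(\mathbb{R})$ would make $Y=(u,u_{\xi},w)^{T}$ a nonzero $L^{2}(\mathbb{R})$ solution of \eqref{eq-spect}, but a nonzero Bloch-type solution $e^{\nu\xi}p(\xi)$ lies in $L^{2}(\mathbb{R})$ only if its exponential factor decays at both $\pm\infty$, which is impossible, and finite sums of such solutions inherit the same obstruction unless they vanish identically. Hence $\sigma(\mathcal{L})=\sigma_{\rm ess}(\mathcal{L})$. The technical heart is then the Fredholm dictionary for operators with asymptotically (here, exactly) periodic coefficients: $\lambda_{0}I-\mathcal{L}$, viewed from $H^{2}(\mathbb{R})\times H^{1}(\mathbb{R})$ to $L^{2}(\mathbb{R})\times L^{2}(\mathbb{R})$, is Fredholm of index zero and boundedly invertible precisely when \eqref{eq-spect} admits an exponential dichotomy on $\mathbb{R}$ (see \cite[Ch.\,3]{Kapitula-Promislow-13} and \cite{Gardner-93,Sandstede-02}), and for the $A_{\epsilon}$-periodic system \eqref{eq-spect} this is equivalent to $M(\lambda_{0})$ having no Floquet multiplier on the unit circle; the index is automatically zero by periodicity. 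Consequently $\lambda_{0}\in\sigma(\mathcal{L})$ if and only if $M(\lambda_{0})$ has an eigenvalue $e^{{\bf i}\mu A_{\epsilon}}$ for some $\mu\in(-\pi/A_{\epsilon},\pi/A_{\epsilon}]$, equivalently \eqref{eq-spect} has a nonzero solution with $Y(A_{\epsilon})=e^{{\bf i}\mu A_{\epsilon}}Y(0)$.

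Finally I would identify this condition with membership in $\sigma_{\rm pt}(\mathcal{L}_{\mu})$. The Bloch substitution $(u,w)\mapsto(e^{{\bf i}\mu\xi}u,e^{{\bf i}\mu\xi}w)$ sets up a linear isomorphism between the space of solutions of \eqref{eq-spect} satisfying $Y(A_{\epsilon})=e^{{\bf i}\mu A_{\epsilon}}Y(0)$ and ${\rm Ker}(\lambda_{0}I-\mathcal{L}_{\mu})$ on $L^{2}_{\rm per}([0,A_{\epsilon}],\mathbb{C})\times L^{2}_{\rm per}([0,A_{\epsilon}],\mathbb{C})$. Since $\mathcal{D}(\mathcal{L}_{\mu})=H^{2}_{\rm per}([0,A_{\epsilon}],\mathbb{C})\times H^{1}_{\rm per}([0,A_{\epsilon}],\mathbb{C})$ embeds compactly into $L^{2}_{\rm per}([0,A_{\epsilon}],\mathbb{C})\times L^{2}_{\rm per}([0,A_{\epsilon}],\mathbb{C})$, and $\rho(\mathcal{L}_{\mu})\neq\emptyset$ because the Floquet multipliers of \eqref{eq-spect} leave the unit circle once $\,{\rm Re}\,\lambda\,$ is large in a suitable sector (the spatial exponents behave like $\pm\sqrt{\lambda}$ and $\lambda/c$ there), the operator $\mathcal{L}_{\mu}$ has compact resolvent; hence $\sigma(\mathcal{L}_{\mu})=\sigma_{\rm pt}(\mathcal{L}_{\mu})$ with each eigenvalue of finite algebraic multiplicity. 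Chaining the three reductions yields $\lambda_{0}\in\sigma(\mathcal{L})$ if and only if $\lambda_{0}\in\sigma_{\rm pt}(\mathcal{L}_{\mu})$ for some $\mu\in(-\pi/A_{\epsilon},\pi/A_{\epsilon}]$, which is exactly \eqref{spect-decomp}. I expect the exponential-dichotomy/Fredholm step to be the main obstacle, since it must be applied with care to the mixed parabolic–hyperbolic structure of $\mathcal{L}$ (a genuine second-order term appearing only in the $u$-component); however, this is by now a standard argument in the spectral theory of periodic traveling waves, so invoking \cite{Gardner-93,Kapitula-Promislow-13,Sandstede-02} should suffice.
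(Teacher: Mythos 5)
Your proposal is correct and follows essentially the same route the paper takes: the paper does not write out a detailed proof but simply summarizes the standard Floquet--Bloch argument (no point spectrum, essential spectrum characterized by solutions with $Y(A_{\epsilon})=e^{{\bf i}\mu A_{\epsilon}}Y(0)$, then the Bloch substitution identifying these with $\sigma_{\rm pt}(\mathcal{L}_{\mu})$) and refers to \cite[pp.\,68--70]{Kapitula-Promislow-13}. Your write-up supplies the exponential-dichotomy/Fredholm details and the compact-resolvent argument for $\mathcal{L}_{\mu}$ that the paper leaves to the cited references, so it is a faithful, more explicit version of the same proof.
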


In order to analyze the spectrum of $\mathcal{L}_{\mu}$,
we  also change the spectral problem for $\mathcal{L}_{\mu}$ into a first-order system of  ordinary differential equations
\begin{eqnarray} \label{eq-period}
\frac{d Y}{d \xi}=(A(\xi,\lambda)-{\bf i}\mu I_{3})Y, \ \ \  Y(A_{\epsilon})=Y(0),
\end{eqnarray}
where ${\bf i}=\sqrt{-1}$ and $I_{3}$ is the $3\times 3$ identity matrix.
Let $\Phi(\xi,\lambda,\mu)$ denote the fundamental matrix solution of \eqref{eq-period}.
Then \eqref{eq-period} has a nontrivial solution if and only if
\begin{eqnarray} \label{df-evans}
E(\lambda,\mu):={\rm det}(\Phi(A_{\epsilon},\lambda,\mu)-I_{3})=0, \ \ \  \lambda\in \mathbb{C},
\ \ \ -\frac{\pi}{A_{\epsilon}}<\mu\leq \frac{\pi}{A_{\epsilon}},
\end{eqnarray}
where $E(\lambda,\mu)$ is called the {\it Evans function}
(see, for instance,
\cite{Alexander-Gardner-Jones-90,Evans-74,Gardner-93,Jones-84,Kapitula-Promislow-13,Sandstede-02}).
Then the spectrum of $\mathcal{L}_{\mu}$ is determined by the zeros of $E(\lambda,\mu)$.

\subsection{Relatively bounded perturbation}
\label{sec-pert}

By \eqref{spect-decomp} we observe that
$\sigma(\mathcal{L})$ is spectrally unstable
if $\mathcal{L}_{\mu}$ is spectrally unstable for a certain $\mu$ in $(-\pi/A_{\epsilon},\pi/A_{\epsilon}]$.
Particularly, we consider the case $\mu=0$ in \eqref{L-mu}.
Then $\mathcal{L}_{\mu}$ is reduced to
$\mathcal{L}_{0}=\mathcal{L}$
with the domain $\mathcal{D}(\mathcal{L}_{0})=H^{2}_{\rm per}([0,A_{\epsilon}],\mathbb{C})\times H^{1}_{\rm per}([0,A_{\epsilon}],\mathbb{C})$.
In order to remove the $\epsilon$-dependence associated with the domain $\mathcal{D}(\mathcal{L}_{0})$,
we make a change
\begin{eqnarray*}
\xi\to A_{\epsilon}\xi, \ \ \ \ \ \epsilon\in(0,\epsilon_{0}].
\end{eqnarray*}
Then the spectral problem associated with the periodic wave $(\phi,\psi)$
is converted into the spectral problem for the following operator
\begin{eqnarray} \label{F-eps}
\ \ \ \ \mathcal{F}(\epsilon)
\left(
\begin{array}{c}
u \\
w
\end{array}
\right)
:= \left(
\begin{array}{c}
u_{\xi\xi}-cA_{\epsilon}u_{\xi}+A_{\epsilon}^{2}f_{u}(\phi,\psi,\alpha)u+A_{\epsilon}^{2}f_{w}(\phi,\psi,\alpha)w \\
-cA_{\epsilon}w_{\xi}+A_{\epsilon}^{2}g_{u}(\phi,\psi,\alpha)u+A_{\epsilon}^{2}g_{w}(\phi,\psi,\alpha)w
\end{array}
\right)
=\lambda
\left(
\begin{array}{c}
u \\
w
\end{array}
\right),
\end{eqnarray}
where
\begin{eqnarray*}
\mathcal{F}(\epsilon):
L^{2}_{\rm per}([0,1],\mathbb{C})\times L^{2}_{\rm per}([0,1],\mathbb{C})
\to L^{2}_{\rm per}([0,1],\mathbb{C})\times L^{2}_{\rm per}([0,1],\mathbb{C})
\end{eqnarray*}
with the domain $\mathcal{D}(\mathcal{F}(\epsilon))=H^{2}_{\rm per}([0,1],\mathbb{C})\times H^{1}_{\rm per}([0,1],\mathbb{C})$
for each $\epsilon\in (0,\epsilon_{0}]$.
For each $(u,w)^{T}$ in $L^{2}_{\rm per}([0,1],\mathbb{C})\times L^{2}_{\rm per}([0,1],\mathbb{C})$,
let the norm of $(u,w)^{T}$ be defined by
\begin{eqnarray*}
|(u,w)^{T}|_{2}=|u|_{2}+|w|_{2},
\end{eqnarray*}
where $|u|_{2}$ and $|w|_{2}$ are the $L^{2}$-norms of $u$ and $w$ in $L^{2}([0,1],\mathbb{C})$, respectively.

On the spectra $\sigma(\mathcal{L}_{0})$ and $\sigma(\mathcal{F}(\epsilon))$,  we have the next lemma.
\begin{lemma} \label{lm-spect-1}
The spectra $\sigma(\mathcal{L}_{0})$ and $\sigma(\mathcal{F}(\epsilon))$  for each $\epsilon\in(0,\epsilon_{0}]$
are both composed by a countable number of eigenvalues,
which have finite algebraic multiplicities.
Furthermore, if $\lambda$ is an eigenvalue of $\mathcal{F}(\epsilon)$,
then $\lambda/A_{\epsilon}^{2}$ is  an eigenvalue of  $\mathcal{L}_{0}$.
\end{lemma}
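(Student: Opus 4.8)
The plan is to show that $\mathcal{L}_{0}$ has nonempty resolvent set and compact resolvent; the claimed structure of $\sigma(\mathcal{L}_{0})$ then follows from the classical Riesz--Schauder theory, and the corresponding facts for $\mathcal{F}(\epsilon)$, together with the eigenvalue correspondence, are obtained from those for $\mathcal{L}_{0}$ by the rescaling $\xi\mapsto A_{\epsilon}\xi$.

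First I would write $\mathcal{L}_{0}=\mathcal{L}_{00}+\mathcal{B}$, where $\mathcal{L}_{00}(u,w)^{T}=(u_{\xi\xi}-cu_{\xi},\,-cw_{\xi})^{T}$ is the constant-coefficient principal part on $L^{2}_{\rm per}([0,A_{\epsilon}],\mathbb{C})\times L^{2}_{\rm per}([0,A_{\epsilon}],\mathbb{C})$ with domain $H^{2}_{\rm per}([0,A_{\epsilon}],\mathbb{C})\times H^{1}_{\rm per}([0,A_{\epsilon}],\mathbb{C})$, and $\mathcal{B}$ is multiplication by the matrix with entries $f_{u}(\phi,\psi,\alpha)$, $f_{w}(\phi,\psi,\alpha)$, $g_{u}(\phi,\psi,\alpha)$, $g_{w}(\phi,\psi,\alpha)$. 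Since the profile $(\phi,\psi)$ is smooth and periodic and $f,g$ are smooth, these coefficients are bounded continuous periodic functions, so $\mathcal{B}$ is a bounded operator. Diagonalizing $\mathcal{L}_{00}$ in the Fourier basis $\{e^{2\pi{\bf i}k\xi/A_{\epsilon}}\}_{k\in\mathbb{Z}}$ gives the eigenvalues $-(2\pi k/A_{\epsilon})^{2}-2\pi{\bf i}ck/A_{\epsilon}$ from the $u$-block and $-2\pi{\bf i}ck/A_{\epsilon}$ from the $w$-block; since $c\neq0$, both families tend to infinity in modulus, so $\mathcal{L}_{00}$ has compact resolvent, and for real $\lambda>0$ one checks $\|(\lambda I-\mathcal{L}_{00})^{-1}\|\le 1/\lambda$, which tends to $0$ as $\lambda\to+\infty$. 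Hence for $\lambda$ large and real we have $\|\mathcal{B}(\lambda I-\mathcal{L}_{00})^{-1}\|<1$, and a Neumann series yields $\lambda\in\rho(\mathcal{L}_{0})$ with $(\lambda I-\mathcal{L}_{0})^{-1}=\bigl(I-\mathcal{B}(\lambda I-\mathcal{L}_{00})^{-1}\bigr)^{-1}(\lambda I-\mathcal{L}_{00})^{-1}$, a product of a bounded operator and a compact operator, hence compact. (Equivalently, $(\lambda I-\mathcal{L}_{0})^{-1}$ maps $L^{2}_{\rm per}\times L^{2}_{\rm per}$ boundedly onto $\mathcal{D}(\mathcal{L}_{0})=H^{2}_{\rm per}\times H^{1}_{\rm per}$, which embeds compactly back into $L^{2}_{\rm per}\times L^{2}_{\rm per}$ by the Rellich theorem on the circle; the needed equivalence of the graph norm and the $H^{2}\times H^{1}$-norm uses $c\neq0$.) By the spectral theory of operators with compact resolvent (see \cite{Kapitula-Promislow-13,Kato-95}), $\sigma(\mathcal{L}_{0})$ is then a countable set of isolated eigenvalues, each of finite algebraic multiplicity; this is also consistent with the fact that $\sigma(\mathcal{L}_{0})$ equals the zero set of the entire Evans function $E(\cdot,0)$ from \eqref{df-evans}, which is not identically zero because $\rho(\mathcal{L}_{0})\neq\emptyset$.

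Finally I would transfer this to $\mathcal{F}(\epsilon)$. The substitution $\xi\mapsto A_{\epsilon}\xi$ induces the isomorphism $(T_{\epsilon}u)(\xi)=u(A_{\epsilon}\xi)$ from $L^{2}_{\rm per}([0,A_{\epsilon}],\mathbb{C})$ onto $L^{2}_{\rm per}([0,1],\mathbb{C})$, carrying $\mathcal{D}(\mathcal{L}_{0})$ onto $\mathcal{D}(\mathcal{F}(\epsilon))$, and multiplying the eigenvalue equation $\mathcal{L}_{0}(u,w)^{T}=\lambda(u,w)^{T}$ by $A_{\epsilon}^{2}$ and changing variables produces exactly \eqref{F-eps} with spectral parameter $A_{\epsilon}^{2}\lambda$; that is, $\mathcal{F}(\epsilon)=A_{\epsilon}^{2}\,T_{\epsilon}\mathcal{L}_{0}T_{\epsilon}^{-1}$, where $A_{\epsilon}\neq0$. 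Consequently $\sigma(\mathcal{F}(\epsilon))=A_{\epsilon}^{2}\,\sigma(\mathcal{L}_{0})$ with equal algebraic multiplicities, so $\sigma(\mathcal{F}(\epsilon))$ is likewise a countable set of finite-multiplicity eigenvalues, and if $\mathcal{F}(\epsilon)v=\lambda v$ with $v\neq0$ then $\mathcal{L}_{0}(T_{\epsilon}^{-1}v)=(\lambda/A_{\epsilon}^{2})(T_{\epsilon}^{-1}v)$ with $T_{\epsilon}^{-1}v\neq0$, which is the asserted correspondence. The step I expect to be the main obstacle is the compact-resolvent property of $\mathcal{L}_{0}$: this operator is neither self-adjoint nor sectorial, because the $w$-equation is only first order and provides no parabolic smoothing, so the standard analytic-semigroup machinery does not apply directly. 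What makes the argument go through is that the transport term $-cw_{\xi}$ with $c\neq0$ already forces the $w$-block resolvent to be compact on the circle (its Fourier multipliers decay to zero), so compactness survives in both components, and the bounded multiplicative perturbation $\mathcal{B}$ built from the continuous periodic coefficients does not destroy it.
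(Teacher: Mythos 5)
Your argument is correct, but it is considerably more self-contained than the paper's, which disposes of the first assertion by citing \cite[Lemma 8.4.1, p.~242]{Kapitula-Promislow-13} (the standard discreteness result for linear differential operators with periodic coefficients posed on a periodic cell) and of the second by the phrase ``direct computation'' --- the latter being exactly your rescaling identity $\mathcal{F}(\epsilon)=A_{\epsilon}^{2}\,T_{\epsilon}\mathcal{L}_{0}T_{\epsilon}^{-1}$. What your route buys is an explicit compact-resolvent proof in place of the citation: the decomposition $\mathcal{L}_{0}=\mathcal{L}_{00}+\mathcal{B}$, the Fourier diagonalization showing the symbols of both blocks escape to infinity (here your remark that the operator is neither self-adjoint nor sectorial, and that the first-order $w$-block is rescued only by $c\neq 0$ and the boundedness of the periodic cell, is exactly the right point --- on the whole line the same operator has purely essential spectrum), the bound $\|(\lambda I-\mathcal{L}_{00})^{-1}\|\le 1/\lambda$ for real $\lambda>0$, the Neumann series, and Riesz--Schauder. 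One cosmetic slip: from $\lambda I-\mathcal{L}_{0}=\bigl(I-\mathcal{B}(\lambda I-\mathcal{L}_{00})^{-1}\bigr)(\lambda I-\mathcal{L}_{00})$ the resolvent is $(\lambda I-\mathcal{L}_{00})^{-1}\bigl(I-\mathcal{B}(\lambda I-\mathcal{L}_{00})^{-1}\bigr)^{-1}$, i.e.\ your two factors should be written in the opposite order; this does not affect the conclusion, since a product of a compact and a bounded operator in either order is compact. Your version is arguably the more useful one to have on record, since it makes visible precisely where the hypotheses $c\neq 0$ and periodicity of the domain enter.
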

\begin{proof}
The first statement can be obtained by \cite[Lemma 8.4.1, p. 242]{Kapitula-Promislow-13}
and the second one can be obtained by a direct computation.
This finishes the proof.
\end{proof}

In order to analyze the spectrum $\sigma(\mathcal{F}(\epsilon))$
by the perturbation theory for linear operators,
we  decompose $\mathcal{F}(\epsilon)$ into two parts:
\begin{eqnarray*}
\mathcal{F}(\epsilon)=\mathcal{F}_{0}+\mathcal{F}_{1}(\epsilon),
\end{eqnarray*}
where
\begin{eqnarray} \label{F-0}
\mathcal{F}_{0}
\left(
\begin{array}{c}
u \\
w
\end{array}
\right)
:=\left(
\begin{array}{c}
u_{\xi\xi}-c_{0}Au_{\xi}+A^{2}f_{u}(P_{0},\alpha_{0})u+A^{2}f_{w}(P_{0},\alpha_{0})w \\
-c_{0}Aw_{\xi}+A^{2}g_{u}(P_{0},\alpha_{0})u+A^{2}g_{w}(P_{0},\alpha_{0})w
\end{array}
\right),
\end{eqnarray}
and $\mathcal{F}_{1}(\epsilon)=\mathcal{F}(\epsilon)-\mathcal{F}_{0}$, that is,
\begin{eqnarray} \label{F-1}
\mathcal{F}_{1}(\epsilon)
\left(
\begin{array}{c}
u \\
w
\end{array}
\right)
:= \left(
\begin{array}{c}
(c_{0}A-c A_{\epsilon})u_{\xi}+\beta_{u} u+\beta_{w}w \\
(c_{0}A-c A_{\epsilon})w_{\xi}+\gamma_{u}u+\gamma_{w}w
\end{array}
\right).
\end{eqnarray}
Recall that for each $\epsilon\in (0,\epsilon_{0}]$,
\begin{eqnarray*}
\alpha_{1}(\epsilon)=O(\epsilon),\ c_{1}(\epsilon)=O(\epsilon),
\ |(\phi,\phi_{\xi},\psi)|=O(\epsilon),\ |A_{\epsilon}-A|=O(\epsilon),
\end{eqnarray*}
then  we have that
$c_{0}A-c A_{\epsilon}=O(\epsilon)$,
\begin{eqnarray} \label{est-eps}
\begin{aligned}
\beta_{u}(\xi,\epsilon) &= A_{\epsilon}^{2}f_{u}(\phi,\psi,\alpha)-A^{2}f_{u}(P_{0},\alpha_{0})\!\!\!&=O(\epsilon),\\
\beta_{w}(\xi,\epsilon)  &= A_{\epsilon}^{2}f_{w}(\phi,\psi,\alpha)-A^{2}f_{w}(P_{0},\alpha_{0})\!\!\!&=O(\epsilon),\\
\gamma_{u}(\xi,\epsilon)&= A_{\epsilon}^{2}g_{u}(\phi,\psi,\alpha)-A^{2}g_{u}(P_{0},\alpha_{0})\!\!\!&=O(\epsilon),\\
\gamma_{w}(\xi,\epsilon) &= A_{\epsilon}^{2}g_{w}(\phi,\psi,\alpha)-A^{2}g_{w}(P_{0},\alpha_{0})\!\!\!&=O(\epsilon),
\end{aligned}
\end{eqnarray}
for $\epsilon\in (0,\epsilon_{0}]$.
As a consequence,  the operators  $\mathcal{F}(\epsilon)$ for $\epsilon\in(0,\epsilon]$
can be viewed as the perturbations of $\mathcal{F}_{0}$.
Next we give a further study on the relation between
$\mathcal{F}_{0}$ and $\mathcal{F}_{1}(\epsilon)$.
\begin{lemma} \label{lm-relat-bdd}
Let the operators $\mathcal{F}_{0}$ and $\mathcal{F}_{1}(\epsilon)$ for $\epsilon\in(0,\epsilon_{0}]$
be defined  by \eqref{F-0} and \eqref{F-1}, respectively.
Then for each $\epsilon\in (0,\epsilon_{0}]$ and each $\lambda\in\mathbb{C}$,
the operator $\mathcal{F}_{1}(\epsilon)$ is relatively bounded with respect to
$\mathcal{F}_{0}-\lambda I$,
that is, $\mathcal{D}(\mathcal{F}_{0}-\lambda I)\subset\mathcal{D}(\mathcal{F}_{1}(\epsilon))$
and there exists two nonnegative constants $a(\epsilon)$ and $b(\epsilon)$ such that
\begin{eqnarray} \label{ineq-relat-bdd}
|\mathcal{F}_{1}(\epsilon)(u,w)^{T}|_{2}\leq a(\epsilon) |(u,w)^{T}|_{2}+b(\epsilon) |(\mathcal{F}_{0}-\lambda I)(u,w)^{T}|_{2},
\end{eqnarray}
for each $(u,w)^{T}$ in $\mathcal{D}(\mathcal{F}_{0})$,
where the constants $a(\epsilon)$ and $b(\epsilon)$ can be in the form
\begin{eqnarray*}
a(\epsilon)\!\!\!&=&\!\!\!
   \kappa_{1}(\epsilon)\left\{1+\frac{2\kappa_{2}(\lambda)+2n(n+1)}{n-1-|c_{0}A|}+\frac{2\kappa_{2}(\lambda)}{|c_{0}A|}\right\},\\
b(\epsilon) \!\!\!&=&\!\!\!
   \kappa_{1}(\epsilon)\left\{\frac{1}{n-1-|c_{0}A|}+\frac{1}{|c_{0}A|}\right\},
\end{eqnarray*}
where $n$  is an integer with $n>1+|c_{0}A|$, and $\kappa_{1}(\epsilon)$ and $\kappa_{2}(\lambda)$ are in the form
\begin{eqnarray*}
\kappa_{1}(\epsilon)\!\!\!&=&\!\!\! 3\cdot\max\{|c_{0}A-cA_{\epsilon}|,
|\beta_{u}(\cdot,\epsilon)|_{\infty}+|\gamma_{w}(\cdot,\epsilon)|_{\infty},
\,|\beta_{w}(\cdot,\epsilon)|_{\infty}+|\gamma_{u}(\cdot,\epsilon)|_{\infty}\},\\
\kappa_{2}(\lambda) \!\!\!&=&\!\!\! \max\{A^{2}(|f_{u}(P_{0},\alpha_{0})|+|g_{w}(P_{0},\alpha_{0})|),
     A^{2}(|f_{w}(P_{0},\alpha_{0})|+|g_{u}(P_{0},\alpha_{0})|)\}+\frac{|\lambda|}{2}.
\end{eqnarray*}
Furthermore, the following limits hold:
\begin{eqnarray} \label{lmt}
a(\epsilon)\to 0,\ \ \ b(\epsilon)\to 0, \ \ \mbox{ as }\ \epsilon\to 0.
\end{eqnarray}
\end{lemma}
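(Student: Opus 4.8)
The plan is to verify the relative boundedness inequality \eqref{ineq-relat-bdd} by producing, for an arbitrary $(u,w)^{T}\in\mathcal{D}(\mathcal{F}_{0})$, explicit $L^{2}$-bounds on each term of $\mathcal{F}_{1}(\epsilon)(u,w)^{T}$ in terms of $|(u,w)^{T}|_{2}$ and $|(\mathcal{F}_{0}-\lambda I)(u,w)^{T}|_{2}$. First I would observe that, by \eqref{F-1} and the triangle inequality together with the definition of $\kappa_{1}(\epsilon)$,
\begin{eqnarray*}
|\mathcal{F}_{1}(\epsilon)(u,w)^{T}|_{2}\leq \tfrac{1}{3}\kappa_{1}(\epsilon)\left(|u_{\xi}|_{2}+|w_{\xi}|_{2}+|u|_{2}+|w|_{2}\right),
\end{eqnarray*}
so that the whole problem reduces to estimating $|u_{\xi}|_{2}$ and $|w_{\xi}|_{2}$ for $(u,w)^{T}$ in the domain. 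The term $|u|_{2}+|w|_{2}=|(u,w)^{T}|_{2}$ is already of the desired form and contributes only to $a(\epsilon)$.

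The core estimates are then the two derivative bounds. For $w_{\xi}$, I would read the second component of $(\mathcal{F}_{0}-\lambda I)(u,w)^{T}$, namely $-c_{0}Aw_{\xi}+A^{2}g_{u}(P_{0},\alpha_{0})u+(A^{2}g_{w}(P_{0},\alpha_{0})-\lambda)w$, solve algebraically for $c_{0}Aw_{\xi}$, and take $L^{2}$-norms; since $c_{0}A\neq0$ this yields
\begin{eqnarray*}
|w_{\xi}|_{2}\leq \frac{1}{|c_{0}A|}\left(|(\mathcal{F}_{0}-\lambda I)(u,w)^{T}|_{2}+\kappa_{2}(\lambda)\,|(u,w)^{T}|_{2}\right),
\end{eqnarray*}
using that $A^{2}|g_{u}(P_{0},\alpha_{0})|u+|A^{2}g_{w}(P_{0},\alpha_{0})-\lambda|\,|w|$ is controlled, after symmetrization between the two components, by $\kappa_{2}(\lambda)\,|(u,w)^{T}|_{2}$ (this is exactly why $\kappa_{2}$ is defined with the $\max$ of the cross-coupling sums plus $|\lambda|/2$). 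For $u_{\xi}$ the situation is harder: $u_{\xi}$ does not appear directly isolated, so I would first bound $|u_{\xi\xi}|_{2}$ from the first component of $(\mathcal{F}_{0}-\lambda I)(u,w)^{T}$, then invoke an interpolation-type inequality on the periodic interval $[0,1]$ — namely that for $u\in H^{2}_{\mathrm{per}}([0,1],\mathbb{C})$ and any integer $n>1$ one has $|u_{\xi}|_{2}\leq \frac{1}{n}|u_{\xi\xi}|_{2}+n|u|_{2}$ (proved via Fourier series, where the $n$-dependence tracks the split between low and high modes). Combining this with the already-derived bound on $|u_{\xi\xi}|_{2}$, and absorbing the $|c_{0}A|\,|u_{\xi}|_{2}$ cross-term that arises (which forces the denominator $n-1-|c_{0}A|$ and hence the requirement $n>1+|c_{0}A|$), produces
\begin{eqnarray*}
|u_{\xi}|_{2}\leq \frac{1}{n-1-|c_{0}A|}|(\mathcal{F}_{0}-\lambda I)(u,w)^{T}|_{2}
+\frac{\kappa_{2}(\lambda)+n(n+1)}{n-1-|c_{0}A|}\,|(u,w)^{T}|_{2}.
\end{eqnarray*}
Adding the $u_{\xi}$, $w_{\xi}$ and zeroth-order contributions and collecting coefficients gives precisely the stated $a(\epsilon)$ and $b(\epsilon)$.

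The main obstacle I anticipate is the bookkeeping in the $u_{\xi}$ estimate: getting the correct interpolation constant on the periodic interval and then carefully tracking how the first-order term $-c_{0}Au_{\xi}$ inside $\mathcal{F}_{0}$ must be moved to the other side, which couples $|u_{\xi}|_{2}$ on both sides of the inequality and necessitates the $n>1+|c_{0}A|$ hypothesis for the absorption to close. Once \eqref{ineq-relat-bdd} holds with these constants, the limits \eqref{lmt} are immediate: $n$, $|c_{0}A|$, $\kappa_{2}(\lambda)$ and $A$ are all fixed independently of $\epsilon$, while $\kappa_{1}(\epsilon)\to0$ as $\epsilon\to0$ by the $O(\epsilon)$ estimates in \eqref{est-eps} together with $c_{0}A-cA_{\epsilon}=O(\epsilon)$; since $a(\epsilon)$ and $b(\epsilon)$ are $\kappa_{1}(\epsilon)$ times $\epsilon$-independent bounded factors, both tend to $0$. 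Finally I would note that $\mathcal{D}(\mathcal{F}_{0})=H^{2}_{\mathrm{per}}\times H^{1}_{\mathrm{per}}([0,1],\mathbb{C})$ is independent of $\lambda$ and contained in $\mathcal{D}(\mathcal{F}_{1}(\epsilon))$ (the latter requiring only one derivative in each component), so the inclusion $\mathcal{D}(\mathcal{F}_{0}-\lambda I)\subset\mathcal{D}(\mathcal{F}_{1}(\epsilon))$ is clear, completing the proof.
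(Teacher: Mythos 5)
Your proposal follows essentially the same route as the paper: bound $|\mathcal{F}_{1}(\epsilon)(u,w)^{T}|_{2}$ by $\kappa_{1}(\epsilon)(|u_{\xi}|_{2}+|w_{\xi}|_{2}+|(u,w)^{T}|_{2})$, isolate $c_{0}Aw_{\xi}$ from the second component of $\mathcal{F}_{0}-\lambda I$, control $|u_{\xi}|_{2}$ through $|u_{\xi\xi}|_{2}$ via an interpolation inequality with absorption of the $|c_{0}A|\,|u_{\xi}|_{2}$ term (forcing $n>1+|c_{0}A|$), and conclude \eqref{lmt} from $\kappa_{1}(\epsilon)=O(\epsilon)$. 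The only deviations are bookkeeping ones — the paper uses Kato's inequality $|u_{\xi}|_{2}\leq\frac{1}{n-1}|u_{\xi\xi}|_{2}+\frac{2n(n+1)}{n-1}|u|_{2}$ and a factor $2\kappa_{2}(\lambda)$ (needed since $A^{2}|g_{w}|+|\lambda|\leq 2\kappa_{2}(\lambda)$ but not $\leq\kappa_{2}(\lambda)$) to land exactly on the stated $a(\epsilon),b(\epsilon)$ — which do not affect the validity of the argument.
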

\begin{proof}
Noting that
\begin{eqnarray*}
\mathcal{F}_{0}:
L^{2}_{\rm per}([0,1],\mathbb{C})\times L^{2}_{\rm per}([0,1],\mathbb{C})
\to L^{2}_{\rm per}([0,1],\mathbb{C})\times L^{2}_{\rm per}([0,1],\mathbb{C})
\end{eqnarray*}
has the domain $\mathcal{D}(\mathcal{F}_{0})=H^{2}_{\rm per}([0,1],\mathbb{C})\times H^{1}_{\rm per}([0,1],\mathbb{C})$,
and
\begin{eqnarray*}
\mathcal{F}_{1}(\epsilon):
L^{2}_{\rm per}([0,1],\mathbb{C})\times L^{2}_{\rm per}([0,1],\mathbb{C})
\to L^{2}_{\rm per}([0,1],\mathbb{C})\times L^{2}_{\rm per}([0,1],\mathbb{C})
\end{eqnarray*}
has the domain $\mathcal{D}(\mathcal{F}_{1}(\epsilon))=H^{1}_{\rm per}([0,1],\mathbb{C})\times H^{1}_{\rm per}([0,1],\mathbb{C})$,
we then conclude that $\mathcal{D}(\mathcal{F}_{0})\subset\mathcal{D}(\mathcal{F}_{1}(\epsilon))$
for each $\epsilon\in(0,\epsilon_{0}]$.
This finishes the proof for the first statement.

Next we prove \eqref{ineq-relat-bdd}.
For each $\epsilon\in(0,\epsilon_{0}]$ and $(u,w)^{T}$ in $\mathcal{D}(\mathcal{F}_{0})$,
\begin{eqnarray} \label{est-F-1}
\begin{aligned}[rcl]
|\mathcal{F}_{1}(\epsilon)(u,w)^{T}|_{2}
&=|((c_{0}A-cA_{\epsilon})u_{\xi}+\beta_{u} u+\beta_{w}w,(c_{0}A-cA_{\epsilon})w_{\xi}+\gamma_{u}u+\gamma_{w}w)^{T}|_{2}\\
&\leq \kappa_{1}(\epsilon)\left(|u_{\xi}|_{2}+|w_{\xi}|_{2}+|(u,w)^{T}|_{2}\right),
\end{aligned}
\end{eqnarray}
where  $\kappa_{1}$ is defined as in this lemma.
By  \eqref{F-0}, we have
\begin{eqnarray*}
&&\lefteqn{|(u_{\xi\xi}-c_{0}Au_{\xi},-c_{0}Aw_{\xi})^{T}|_{2}}\\
&&= |\mathcal{F}_{0}(u,w)^{T}-(A^{2}f_{u}(P_{0},\alpha_{0})u
+A^{2}f_{w}(P_{0},\alpha_{0})w,A^{2}g_{u}(P_{0},\alpha_{0})u+A^{2}g_{w}(P_{0},\alpha_{0})w)^{T}|_{2}.
\end{eqnarray*}
Then
\begin{eqnarray*}
|c_{0}Aw_{\xi}|_{2} \!\!\!&\leq&\!\!\! |(\mathcal{F}_{0}-\lambda I)(u,w)^{T}|_{2}+2\kappa_{2}(\lambda)|(u,w)^{T}|_{2}, \\
|u_{\xi\xi}-c_{0}Au_{\xi}|_{2} \!\!\!&\leq&\!\!\!  |(\mathcal{F}_{0}-\lambda I)(u,w)^{T}|_{2}+2\kappa_{2}(\lambda)|(u,w)^{T}|_{2},
\end{eqnarray*}
where $\kappa_{2}(\lambda)$ is defined as in this lemma.
These yield that
\begin{eqnarray}
|w_{\xi}|_{2} \!\!\!&\leq&\!\!\!
|c_{0}A|^{-1}\left(|(\mathcal{F}_{0}-\lambda I)(u,w)^{T}|_{2}+2\kappa_{2}(\lambda)|(u,w)^{T}|_{2}\right), \label{est-w-1d}\\
|u_{\xi\xi}|_{2} \!\!\!&\leq&\!\!\!
|(\mathcal{F}_{0}-\lambda I)(u,w)^{T}|_{2}+2\kappa_{2}(\lambda)|(u,w)^{T}|_{2}+|c_{0}Au_{\xi}|_{2}. \label{est-u}
\end{eqnarray}
By the Inequality (1.12) in \cite[p.192]{Kato-95}, we have that for each $n>1$,
\begin{eqnarray*}
|u_{\xi}|_{2}\leq \frac{1}{n-1}|u_{\xi\xi}|_{2}+\frac{2n(n+1)}{n-1}|u|_{2}.
\end{eqnarray*}
This together with \eqref{est-u} yields that for $n>1$,
\begin{eqnarray*}
\begin{aligned}
\left(1-\frac{|c_{0}A|}{n-1}\right)|u_{\xi}|_{2}
\leq&\ \frac{1}{n-1}\left(|(\mathcal{F}_{0}-\lambda I)(u,w)^{T}|_{2}+2\kappa_{2}(\lambda)|(u,w)^{T}|_{2}\right)\\
&+\frac{2n(n+1)}{n-1}|u|_{2}.
\end{aligned}
\end{eqnarray*}
Hence, for every $n>1+|c_{0}A|$, we have
\begin{eqnarray} \label{est-u-1d}
\begin{aligned}
|u_{\xi}|_{2}
\leq&\ \frac{1}{n-1-|c_{0}A|}\left(|(\mathcal{F}_{0}-\lambda I)(u,w)^{T}|_{2}+2\kappa_{2}(\lambda)|(u,w)^{T}|_{2}\right)\\
&+\frac{2n(n+1)}{n-1-|c_{0}A|}|u|_{2}.
\end{aligned}
\end{eqnarray}
Substituting \eqref{est-w-1d} and \eqref{est-u-1d} into \eqref{est-F-1} yields that
\eqref{ineq-relat-bdd} holds.

Finally, we prove \eqref{lmt}.
By \eqref{est-eps} we have $\kappa_{1}(\epsilon)=O(\epsilon)$ for $\epsilon\in(0,\epsilon_{0}]$.
Then by the definitions of $a(\epsilon)$ and $b(\epsilon)$, we obtain \eqref{lmt}.
This finishes the proof.
\end{proof}

Letting $\lambda=0$ in  Lemma \ref{lm-relat-bdd}, we have the next corollary.
\begin{corollary} \label{cor-bdd}
For each $\epsilon\in (0,\epsilon_{0}]$,
the operator $\mathcal{F}_{1}(\epsilon)$ is relatively bounded with respect to
$\mathcal{F}_{0}$, and
\begin{eqnarray*}
|\mathcal{F}_{1}(\epsilon)(u,w)^{T}|_{2}\leq a(\epsilon) |(u,w)^{T}|_{2}+b(\epsilon) |\mathcal{F}_{0}(u,w)^{T}|_{2},
\end{eqnarray*}
for each $(u,w)^{T}$ in $\mathcal{D}(\mathcal{F}_{0})$,
where the constants $a(\epsilon)$ and $b(\epsilon)$ are defined as in Lemma \ref{lm-relat-bdd}.
\end{corollary}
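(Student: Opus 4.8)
The plan is to read off the corollary as the specialization $\lambda=0$ of Lemma \ref{lm-relat-bdd}. First I would note that Lemma \ref{lm-relat-bdd} has already been established for \emph{every} $\lambda\in\mathbb{C}$: it provides the domain inclusion $\mathcal{D}(\mathcal{F}_{0})\subset\mathcal{D}(\mathcal{F}_{1}(\epsilon))$, which is $\lambda$-independent and comes from $H^{2}_{\rm per}([0,1],\mathbb{C})\times H^{1}_{\rm per}([0,1],\mathbb{C})\subset H^{1}_{\rm per}([0,1],\mathbb{C})\times H^{1}_{\rm per}([0,1],\mathbb{C})$, together with the bound \eqref{ineq-relat-bdd}. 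Substituting $\lambda=0$ gives $\mathcal{F}_{0}-\lambda I=\mathcal{F}_{0}$, so \eqref{ineq-relat-bdd} becomes verbatim
\begin{eqnarray*}
|\mathcal{F}_{1}(\epsilon)(u,w)^{T}|_{2}\leq a(\epsilon)\,|(u,w)^{T}|_{2}+b(\epsilon)\,|\mathcal{F}_{0}(u,w)^{T}|_{2},
\end{eqnarray*}
for each $(u,w)^{T}$ in $\mathcal{D}(\mathcal{F}_{0})$, with the same constants $a(\epsilon)$ and $b(\epsilon)$, now understood with $\kappa_{2}(\lambda)$ evaluated at $\lambda=0$, i.e.\ $\kappa_{2}(0)=\max\{A^{2}(|f_{u}(P_{0},\alpha_{0})|+|g_{w}(P_{0},\alpha_{0})|),\,A^{2}(|f_{w}(P_{0},\alpha_{0})|+|g_{u}(P_{0},\alpha_{0})|)\}$. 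Since the domain inclusion together with this estimate is precisely the definition of $\mathcal{F}_{1}(\epsilon)$ being relatively bounded with respect to $\mathcal{F}_{0}$, the corollary follows.

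There is no genuine obstacle here: the statement is a direct specialization and all the analytic content already sits in Lemma \ref{lm-relat-bdd}. If one wanted a self-contained derivation, one could instead rerun the argument of that lemma with $\lambda=0$ from the outset, in which case the intermediate estimates \eqref{est-w-1d} and \eqref{est-u} simplify slightly (the $|\lambda|/2$ summand in $\kappa_{2}$ disappears), but invoking the lemma directly is cleaner and keeps the constants consistent with the general case. The reason for isolating the $\lambda=0$ case is that $\mathcal{F}_{0}$ itself — rather than $\mathcal{F}_{0}-\lambda I$ — is the unperturbed operator whose resolvent and spectrum will be compared with those of $\mathcal{F}(\epsilon)$ in the perturbation analysis, so recording this form explicitly is the convenient starting point.
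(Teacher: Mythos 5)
Your proposal is correct and matches the paper exactly: the paper derives Corollary \ref{cor-bdd} by simply setting $\lambda=0$ in Lemma \ref{lm-relat-bdd}, which is precisely your argument. The additional remarks on the $\lambda$-independence of the domain inclusion and on $\kappa_{2}(0)$ are accurate but not needed beyond what the lemma already provides.
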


We remark that in order to study the spectral stability of periodic traveling waves arising from Hopf bifurcation
in a class of systems of scalar viscous balance laws in one space dimension,
the recent work \cite{Alvarez-Plaza-21} also obtained the similar result as Corollary \ref{cor-bdd}.
We next study  the spectral perturbation problem for $\mathcal{F}_{0}$.
By \cite[Proposition 15.3, p. 151]{Hislop-Sigal} we have the following lemma.

\begin{lemma} \label{lm-pert}
Assume that $\lambda=\lambda_{0}$ is an isolated point spectrum of $\mathcal{F}_{0}$
and has finite algebraic multiplicity $m(\lambda_{0})$.
Let $\Gamma\subset \rho(\mathcal{F}_{0})$  be a small cycle surrounding $\lambda_{0}$
such that $\lambda_{0}$ is the only spectrum point inside the bounded open set surrounded by $\Gamma$.
Assume that the following statements hold:
\begin{enumerate}
\item[{\bf (i)}]
there exists a small constant $\tilde{\epsilon}_{0}>0$ such that
$\Gamma\subset \rho(\mathcal{F}(\epsilon))$ for all $\epsilon$ in $(0,\tilde{\epsilon}_{0}]$.
\item[{\bf (ii)}]
$|P_{\Gamma}(\mathcal{F}(\epsilon))-P_{\Gamma}(\mathcal{F}_{0})| \to 0$ as $\epsilon\to 0$.
\end{enumerate}
Then for sufficiently small $\epsilon$,
there are $\tilde{m}_{\epsilon}$ eigenvalues $\lambda_{1}(\epsilon),...,\lambda_{\tilde{m}_{\epsilon}}(\epsilon)$
 of $\mathcal{F}(\epsilon)$ inside $\Gamma$
and $\lambda_{j}(\epsilon)\to \lambda_{0}$ as $\epsilon\to 0$,
 and the total algebraic multiplicity of  $\lambda_{j}(\epsilon)$ is equal to $m(\lambda_{0})$.
\end{lemma}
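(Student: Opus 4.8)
\emph{Proof strategy.} The plan is to read the statement as a rank-stability property of the Riesz spectral projections and to reduce it to the classical fact that two projections at operator-norm distance less than $1$ have the same rank. First I would note that, $\lambda_{0}$ being an isolated eigenvalue of $\mathcal{F}_{0}$ of finite algebraic multiplicity, the Riesz projection
\begin{eqnarray*}
P_{\Gamma}(\mathcal{F}_{0})=\frac{1}{2\pi{\bf i}}\oint_{\Gamma}(\lambda I-\mathcal{F}_{0})^{-1}\,d\lambda
\end{eqnarray*}
is well defined and, by the very definition of the algebraic multiplicity recalled in Section \ref{sec-spect}, ${\rm dim}\,\mathcal{R}(P_{\Gamma}(\mathcal{F}_{0}))=m(\lambda_{0})$. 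By hypothesis {\bf (i)}, $\Gamma\subset\rho(\mathcal{F}(\epsilon))$ for $\epsilon\in(0,\tilde{\epsilon}_{0}]$, so $P_{\Gamma}(\mathcal{F}(\epsilon))$ is likewise well defined; since $\sigma(\mathcal{F}(\epsilon))$ consists of isolated eigenvalues of finite algebraic multiplicity by Lemma \ref{lm-spect-1}, only finitely many of them---say $\lambda_{1}(\epsilon),\dots,\lambda_{\tilde{m}_{\epsilon}}(\epsilon)$---lie inside $\Gamma$, and, because the Riesz projection over $\Gamma$ projects onto the direct sum of the associated generalized eigenspaces, ${\rm dim}\,\mathcal{R}(P_{\Gamma}(\mathcal{F}(\epsilon)))$ equals the sum of their algebraic multiplicities.

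Next I would invoke the classical fact that two bounded projections $P$ and $Q$ with $|P-Q|<1$ are similar, and hence have the same rank (see, e.g., \cite{Kato-95}). By hypothesis {\bf (ii)} there is $\epsilon_{1}\in(0,\tilde{\epsilon}_{0}]$ with $|P_{\Gamma}(\mathcal{F}(\epsilon))-P_{\Gamma}(\mathcal{F}_{0})|<1$ for all $\epsilon\in(0,\epsilon_{1}]$; for such $\epsilon$ the two projections have equal rank, so the total algebraic multiplicity of the eigenvalues of $\mathcal{F}(\epsilon)$ enclosed by $\Gamma$ equals $m(\lambda_{0})$; in particular $\tilde{m}_{\epsilon}\geq 1$.

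It remains to prove $\lambda_{j}(\epsilon)\to\lambda_{0}$ as $\epsilon\to0$, for which I would rerun the argument on arbitrarily small cycles about $\lambda_{0}$. Fix $\delta>0$ small enough that the closed disk $\{|z-\lambda_{0}|\leq\delta\}$ is contained in the bounded open region enclosed by $\Gamma$, and set $\Gamma_{\delta}=\{|z-\lambda_{0}|=\delta\}$. Since $\lambda_{0}$ is the only point of $\sigma(\mathcal{F}_{0})$ enclosed by $\Gamma$, the closed annular region between $\Gamma_{\delta}$ and $\Gamma$ is a compact subset of $\rho(\mathcal{F}_{0})$; the relatively bounded perturbation estimate of Lemma \ref{lm-relat-bdd} together with $a(\epsilon),b(\epsilon)\to0$ (uniformly for $\lambda$ in a compact set, by the form of $\kappa_{2}(\lambda)$) then gives $|\mathcal{F}_{1}(\epsilon)(\lambda I-\mathcal{F}_{0})^{-1}|\to0$ uniformly on this set, so via the identity $(\lambda I-\mathcal{F}(\epsilon))^{-1}=(\lambda I-\mathcal{F}_{0})^{-1}\bigl(I-\mathcal{F}_{1}(\epsilon)(\lambda I-\mathcal{F}_{0})^{-1}\bigr)^{-1}$ and a Neumann series the annular region lies in $\rho(\mathcal{F}(\epsilon))$ for all small $\epsilon$, the resolvents converge uniformly there, and $\Gamma_{\delta}$ in turn satisfies {\bf (i)} and {\bf (ii)}. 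Hence $\mathcal{F}(\epsilon)$ has no eigenvalue in the annular region, so for small $\epsilon$ all of $\lambda_{1}(\epsilon),\dots,\lambda_{\tilde{m}_{\epsilon}}(\epsilon)$ lie within distance $\delta$ of $\lambda_{0}$, and the first two paragraphs applied to $\Gamma_{\delta}$ again give total multiplicity $m(\lambda_{0})$. As $\delta>0$ is arbitrary, $\lambda_{j}(\epsilon)\to\lambda_{0}$. One could also simply invoke \cite[Proposition 15.3]{Hislop-Sigal} directly.

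The step I expect to be the main obstacle is this last localization: one has to rule out an enclosed eigenvalue of $\mathcal{F}(\epsilon)$ drifting toward $\Gamma$ rather than toward $\lambda_{0}$, and it is precisely here---not in the bookkeeping with Riesz projections---that the quantitative smallness of the relative bounds $a(\epsilon),b(\epsilon)$ from Lemma \ref{lm-relat-bdd} (equivalently, norm-resolvent convergence $\mathcal{F}(\epsilon)\to\mathcal{F}_{0}$ on compact subsets of $\rho(\mathcal{F}_{0})$), and not merely hypotheses {\bf (i)}--{\bf (ii)} for the single cycle $\Gamma$, must be used. In the application this shifts the real work onto verifying {\bf (i)} and {\bf (ii)} themselves, which is exactly what Corollary \ref{cor-bdd} and the resolvent estimates of Lemma \ref{lm-relat-bdd} are designed to deliver.
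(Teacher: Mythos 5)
Your proof is correct, but it takes a genuinely different route from the paper: the paper offers no argument at all for this lemma, simply citing \cite[Proposition 15.3, p.~151]{Hislop-Sigal} (a result stated there for analytic families $H+\kappa W$, as the paper's own remark after Lemma \ref{lm-spect-pert} concedes), whereas you supply a self-contained proof via the classical rank-stability of Riesz projections ($|P-Q|<1$ implies equal rank, Kato I.4.6) plus a localization argument on shrinking contours $\Gamma_{\delta}$. The comparison is instructive in two ways. First, your proof makes the lemma independent of the analyticity hypothesis of the cited reference, which is exactly the point the authors want (they explicitly note that $\mathcal{F}(\epsilon)$ need not be smooth in $\epsilon$), so your argument actually closes the small logical gap left by citing a proposition whose hypotheses are not literally met. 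Second, you correctly observe that hypotheses {\bf (i)}--{\bf (ii)} for a \emph{single} fixed cycle $\Gamma$ cannot yield $\lambda_{j}(\epsilon)\to\lambda_{0}$: a trivial counterexample is $\mathcal{F}(\epsilon)$ acting as a fixed $\lambda_{1}\neq\lambda_{0}$ on the unperturbed eigenspace, which satisfies {\bf (i)}--{\bf (ii)} with identical projections yet has its enclosed eigenvalue bounded away from $\lambda_{0}$. Your fix---verifying {\bf (i)}--{\bf (ii)} on arbitrarily small circles $\Gamma_{\delta}$ via the uniform resolvent bound $|\mathcal{F}_{1}(\epsilon)(\lambda I-\mathcal{F}_{0})^{-1}|\leq a(\epsilon)M+b(\epsilon)\to 0$ on compact subsets of $\rho(\mathcal{F}_{0})$ and the Neumann-series identity for $(\lambda I-\mathcal{F}(\epsilon))^{-1}$---is precisely the mechanism the paper deploys one level up, in the proof of Lemma \ref{lm-spect-pert}, where the hypotheses are checked for an arbitrary small cycle using Lemma \ref{lm-relat-bdd} and \cite[Theorem IV.1.16]{Kato-95}; so no harm is done in the application, but as an abstract statement the lemma should be read with ``for every sufficiently small cycle $\Gamma$'' in its hypotheses, and your write-up makes that explicit where the paper leaves it implicit.
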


By applying Lemmas \ref{lm-relat-bdd} and \ref{lm-pert}, we have the following results on the spectral perturbation.

\begin{lemma} \label{lm-spect-pert}
Assume that $\lambda=\lambda_{0}$ is an isolated point spectrum of $\mathcal{F}_{0}$
and has finite algebraic multiplicity $m(\lambda_{0})$.
Let $\Gamma\subset \rho(\mathcal{F}_{0})$  be a small cycle surrounding $\lambda_{0}$
such that $\lambda_{0}$ is the only spectrum point inside the bounded open set surrounded by $\Gamma$.
Then there exists a constant $\tilde{\epsilon}_{0}$ with $0<\tilde{\epsilon}_{0}\leq \epsilon_{0}$
such that for each $\epsilon$ in $(0,\tilde{\epsilon}_{0}]$,
there are $\tilde{m}_{\epsilon}$ eigenvalues $\lambda_{1}(\epsilon),...,\lambda_{\tilde{m}_{\epsilon}}(\epsilon)$
 of $\mathcal{F}(\epsilon)$ inside $\Gamma$
and $\lambda_{j}(\epsilon)\to \lambda_{0}$ as $\epsilon\to 0$,
 and the total algebraic multiplicity of  $\lambda_{j}(\epsilon)$ is equal to $m(\lambda_{0})$.
\end{lemma}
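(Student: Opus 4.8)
The plan is to deduce the statement from the abstract perturbation result in Lemma \ref{lm-pert}, so it suffices to verify its two hypotheses {\bf (i)} and {\bf (ii)} for the pair $\mathcal{F}_{0}$ and $\mathcal{F}(\epsilon)=\mathcal{F}_{0}+\mathcal{F}_{1}(\epsilon)$, using the relative boundedness established in Lemma \ref{lm-relat-bdd}. Since $\Gamma\subset\rho(\mathcal{F}_{0})$ is compact and $\lambda\mapsto(\mathcal{F}_{0}-\lambda I)^{-1}$ is continuous on it, we may set $M:=\sup_{\lambda\in\Gamma}|(\mathcal{F}_{0}-\lambda I)^{-1}|<+\infty$. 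Moreover $\Gamma$ is bounded, so $\sup_{\lambda\in\Gamma}|\lambda|<+\infty$, and by the explicit formulas for $a(\epsilon)$, $b(\epsilon)$ and $\kappa_{2}(\lambda)$ in Lemma \ref{lm-relat-bdd} the constants in \eqref{ineq-relat-bdd} can be chosen uniformly over $\lambda\in\Gamma$; denoting these uniform choices again by $a(\epsilon)$ and $b(\epsilon)$, the limits \eqref{lmt} still give $a(\epsilon)\to0$ and $b(\epsilon)\to0$ as $\epsilon\to0$.

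First I would bound the operator norm of $\mathcal{F}_{1}(\epsilon)(\mathcal{F}_{0}-\lambda I)^{-1}$. For $x$ in $L^{2}_{\rm per}([0,1],\mathbb{C})\times L^{2}_{\rm per}([0,1],\mathbb{C})$, put $y=(\mathcal{F}_{0}-\lambda I)^{-1}x$, which lies in $\mathcal{D}(\mathcal{F}_{0})$ and satisfies $(\mathcal{F}_{0}-\lambda I)y=x$; applying \eqref{ineq-relat-bdd} to $y$ yields
\begin{eqnarray*}
|\mathcal{F}_{1}(\epsilon)(\mathcal{F}_{0}-\lambda I)^{-1}x|_{2}=|\mathcal{F}_{1}(\epsilon)y|_{2}\leq a(\epsilon)|y|_{2}+b(\epsilon)|x|_{2}\leq (a(\epsilon)M+b(\epsilon))|x|_{2}.
\end{eqnarray*}
Hence $|\mathcal{F}_{1}(\epsilon)(\mathcal{F}_{0}-\lambda I)^{-1}|\leq q(\epsilon):=a(\epsilon)M+b(\epsilon)$ for every $\lambda\in\Gamma$, with $q(\epsilon)\to0$ as $\epsilon\to0$. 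Choose $\tilde{\epsilon}_{0}\in(0,\epsilon_{0}]$ so small that $q(\epsilon)<1/2$ for all $\epsilon\in(0,\tilde{\epsilon}_{0}]$. Writing
\begin{eqnarray*}
\mathcal{F}(\epsilon)-\lambda I=\left(I+\mathcal{F}_{1}(\epsilon)(\mathcal{F}_{0}-\lambda I)^{-1}\right)(\mathcal{F}_{0}-\lambda I),
\end{eqnarray*}
the first factor is boundedly invertible by the Neumann series, so $\lambda I-\mathcal{F}(\epsilon)$ has a bounded inverse; thus $\Gamma\subset\rho(\mathcal{F}(\epsilon))$ for all $\epsilon\in(0,\tilde{\epsilon}_{0}]$, which is hypothesis {\bf (i)}.

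Next I would verify hypothesis {\bf (ii)}. From the factorization above,
\begin{eqnarray*}
(\mathcal{F}(\epsilon)-\lambda I)^{-1}-(\mathcal{F}_{0}-\lambda I)^{-1}=(\mathcal{F}_{0}-\lambda I)^{-1}\left[\left(I+\mathcal{F}_{1}(\epsilon)(\mathcal{F}_{0}-\lambda I)^{-1}\right)^{-1}-I\right],
\end{eqnarray*}
and the bracketed operator has norm at most $q(\epsilon)/(1-q(\epsilon))\leq2q(\epsilon)$ for $\epsilon\in(0,\tilde{\epsilon}_{0}]$, uniformly in $\lambda\in\Gamma$; since $(\lambda I-A)^{-1}=-(A-\lambda I)^{-1}$ we also get $\sup_{\lambda\in\Gamma}|(\lambda I-\mathcal{F}(\epsilon))^{-1}-(\lambda I-\mathcal{F}_{0})^{-1}|\leq 2Mq(\epsilon)$. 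Using the Dunford representation $P_{\Gamma}(\mathcal{F}(\epsilon))=\frac{1}{2\pi{\bf i}}\oint_{\Gamma}(\lambda I-\mathcal{F}(\epsilon))^{-1}d\lambda$ (and likewise for $\mathcal{F}_{0}$), subtracting and estimating the contour integral gives
\begin{eqnarray*}
|P_{\Gamma}(\mathcal{F}(\epsilon))-P_{\Gamma}(\mathcal{F}_{0})|\leq\frac{{\rm length}(\Gamma)}{2\pi}\cdot 2Mq(\epsilon)\to0 \ \ \mbox{ as } \epsilon\to0,
\end{eqnarray*}
which is hypothesis {\bf (ii)}. Applying Lemma \ref{lm-pert} (shrinking $\tilde{\epsilon}_{0}$ if necessary) then yields the conclusion.

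The only delicate point, and the closest thing to an obstacle, is the dependence of the relative bound in Lemma \ref{lm-relat-bdd} on the spectral parameter through $\kappa_{2}(\lambda)$: this is precisely why the cycle $\Gamma$ must be kept fixed and compact, so that $|\lambda|$ stays bounded and the constants $a(\epsilon)$, $b(\epsilon)$ can be taken uniform on $\Gamma$ while still tending to $0$ with $\epsilon$. Once this uniformity is in hand, the argument is the standard norm-resolvent/Dunford-projection perturbation scheme and the remaining estimates are routine.
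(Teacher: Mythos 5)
Your proposal is correct and follows essentially the same route as the paper: both verify hypotheses \textbf{(i)} and \textbf{(ii)} of Lemma \ref{lm-pert} via the relative bound of Lemma \ref{lm-relat-bdd}, a uniform resolvent bound on the compact cycle $\Gamma$, and the Dunford integral estimate for the projections; the only difference is that you unpack the resolvent estimates through an explicit Neumann-series factorization where the paper cites \cite[Theorem 1.16, p.\,196]{Kato-95}, which is proved by the same argument. Your explicit remark that $a(\epsilon)$ depends on $\lambda$ through $\kappa_{2}(\lambda)$ and must be made uniform over the compact $\Gamma$ is a point the paper's proof passes over silently, so your write-up is, if anything, slightly more careful on that detail.
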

\begin{proof}
In order to prove this lemma, it suffices to check the conditions {\bf (i)} and {\bf (ii)} in Lemma \ref{lm-pert} hold.
Since $(\lambda I-\mathcal{F}_{0})^{-1}$ is analytic for $\lambda\in\rho(\mathcal{F}_{0})$,
there exists a constant $M(\Gamma)>0$, which is dependent of $\Gamma$, such that
\begin{eqnarray} \label{ineq-res-1}
|(\lambda I-\mathcal{F}_{0})^{-1}|\leq M(\Gamma), \ \ \ \ \ \lambda\in \Gamma.
\end{eqnarray}
Recall that $a(\epsilon)$ and $b(\epsilon)$ are defined as in Lemma \ref{lm-relat-bdd}
and satisfy the limits in \eqref{lmt}.
Then there exists a constant $\tilde{\epsilon}_{0}$ with $0<\tilde{\epsilon}_{0}\leq \epsilon_{0}$
such that
\begin{eqnarray*}
a(\epsilon)|(\lambda-\mathcal{F}_{0})^{-1}|+b(\epsilon)<1,\ \ \ \ \ \lambda\in \Gamma, \ \ \epsilon\in(0,\tilde{\epsilon}_{0}].
\end{eqnarray*}
Hence by Lemma  \ref{lm-relat-bdd} and \cite[Theorem 1.16, p.\,196]{Kato-95},
we have $\Gamma\subset \rho(\mathcal{F}(\epsilon))$ for all $\epsilon$ in $(0,\tilde{\epsilon}_{0}]$, and
\begin{eqnarray}
|(\lambda I-\mathcal{F}(\epsilon))^{-1}|
\!\!\!&\leq&\!\!\!   \frac{|(\lambda I-\mathcal{F}_{0})^{-1}|}{1-a(\epsilon)|(\lambda I-\mathcal{F}_{0})^{-1}|-b(\epsilon)},\nonumber\\
|(\lambda I-\mathcal{F}(\epsilon))^{-1}-(\lambda I-\mathcal{F}_{0})^{-1}|
\!\!\!&\leq&\!\!\!\frac{|(\lambda I-\mathcal{F}_{0})^{-1}|(a(\epsilon)|(\lambda I-\mathcal{F}_{0})^{-1}|+b(\epsilon))}
{1-a(\epsilon)|(\lambda I-\mathcal{F}_{0})^{-1}|-b(\epsilon)},\label{ineq-res-2}
\end{eqnarray}
for $\lambda\in \Gamma$ and $\epsilon\in(0,\tilde{\epsilon}_{0}]$.
This yields that $\Gamma\subset \rho(\mathcal{F}(\epsilon))$ for all $\epsilon$ in $(0,\tilde{\epsilon}_{0}]$.

Let the spectral projections $P_{\Gamma}(\mathcal{F}_{0})$
and $P_{\Gamma}(\mathcal{F}(\epsilon))$ for $\epsilon\in(0,\tilde{\epsilon}_{0}]$  be defined by
\begin{eqnarray*}
P_{\Gamma}(\mathcal{F}_{0})\!\!\!&=&\!\!\!\frac{1}{2\pi {\bf i}}\oint_{\Gamma}(\lambda I-\mathcal{F}_{0})^{-1}\, d\lambda,\\
P_{\Gamma}(\mathcal{F}(\epsilon))\!\!\!&=&\!\!\!
\frac{1}{2\pi {\bf i}}\oint_{\Gamma}(\lambda I-\mathcal{F}(\epsilon))^{-1}\, d\lambda,
\end{eqnarray*}
respectively. Then by \eqref{ineq-res-1} and \eqref{ineq-res-2}, we have
\begin{eqnarray*}
|P_{\Gamma}(\mathcal{F}_{0})-P_{\Gamma}(\mathcal{F}(\epsilon))|
\leq \frac{1}{2\pi }\oint_{\Gamma}|(\lambda I-\mathcal{F}(\epsilon))^{-1}-(\lambda I-\mathcal{F}_{0})^{-1}|\, |d\lambda|.
\end{eqnarray*}
Consequently,
by the limits in \eqref{lmt}, the inequalities \eqref{ineq-res-1} and \eqref{ineq-res-2}, we have that
\begin{eqnarray*}
|P_{\Gamma}(\mathcal{F}(\epsilon))-P_{\Gamma}(\mathcal{F}_{0})| \to 0, \ \ \ \mbox{ as }\ \ \epsilon\to 0.
\end{eqnarray*}
Thus, the proof for this lemma is finished by applying Lemma \ref{lm-pert}.
\end{proof}

\begin{remark}
The perturbation theory for a family of linear operators
$H_{\kappa}=H+\kappa W$ was considered in Chapter 15 of \cite{Hislop-Sigal},
where $H_{\kappa}$ are required to be analytic in $\kappa$.
In Lemma \ref{lm-spect-pert}, we do not require the smoothness of $\mathcal{F}(\epsilon)$  in $\epsilon$.
From Lemma \ref{lm-spect-pert},  the point spectrum of $\mathcal{F}_{0}$ is often said to be {\it stable
under the perturbations $\mathcal{F}_{1}(\epsilon)$} for sufficiently small $\epsilon>0$.
\end{remark}

\subsection{Instability of periodic traveling waves}
\label{pf-thm-stab}

In order to study the stability of periodic traveling waves emerging from fold-Hopf equilibria,
we first study the spectrum $\sigma(\mathcal{F}_{0})$ of $\mathcal{F}_{0}$.

\begin{lemma} \label{lm-orig-operator}
Let the operator $\mathcal{F}_{0}$ be defined by \eqref{F-0}.
Assume that $A>0$.
Then the following statements hold:
\begin{enumerate}

\item[{\bf (i)}]
$\mathcal{F}_{0}$ has an positive eigenvalue $\lambda=A^{2}\mu_{0}^{2}$
with an eigenfunction $(f_{w}(P_{0},\alpha_{0}),g_{w}(P_{0},\alpha_{0}))^{T}$.

\item[\bf (ii)]
the algebraic multiplicity and the geometric multiplicity of $\lambda=A^{2}\mu_{0}^{2}$
are both equal to one.
\end{enumerate}
\end{lemma}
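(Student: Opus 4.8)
The plan is to exploit that $\mathcal{F}_{0}$ has constant coefficients, so that it is block-diagonalized by the Fourier basis $\{e^{2\pi {\bf i} n\xi}\}_{n\in\mathbb{Z}}$ of $L^{2}_{\rm per}([0,1],\mathbb{C})$ and the whole statement reduces to linear algebra for $2\times2$ matrices. Acting on a pair $(a\,e^{2\pi {\bf i} n\xi},\,b\,e^{2\pi {\bf i} n\xi})^{T}$, the operator $\mathcal{F}_{0}$ becomes multiplication by
\begin{eqnarray*}
B_{n}:=\left(
\begin{array}{cc}
-(2\pi n)^{2}-2\pi {\bf i} n\,c_{0}A+A^{2}f_{u}(P_{0},\alpha_{0}) & A^{2}f_{w}(P_{0},\alpha_{0})\\
A^{2}g_{u}(P_{0},\alpha_{0}) & -2\pi {\bf i} n\,c_{0}A+A^{2}g_{w}(P_{0},\alpha_{0})
\end{array}
\right),
\end{eqnarray*}
whence $\sigma(\mathcal{F}_{0})=\bigcup_{n\in\mathbb{Z}}\sigma(B_{n})$, and for every $\lambda$ the generalized eigenspace of $\mathcal{F}_{0}$ at $\lambda$ is the finite orthogonal sum over $n$ of the generalized eigenspaces of the $B_{n}$ at $\lambda$ (any such generalized eigenfunction has only finitely many nonzero Fourier modes, hence is smooth and lies in $\mathcal{D}(\mathcal{F}_{0})$); equivalently, the Dunford projection of $\mathcal{F}_{0}$ at $\lambda$ is the direct sum of those of the $B_{n}$. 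Reading off the diagonal entries, $|\lambda|\to\infty$ as $|n|\to\infty$ for $\lambda\in\sigma(B_{n})$, so $\sigma(\mathcal{F}_{0})$ is discrete with all its points isolated of finite multiplicity.

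For part (i), writing $f_{u}=f_{u}(P_{0},\alpha_{0})$ and so on, Lemma \ref{lm-FH-pt} supplies $g_{w}=-c_{0}^{2}$, $f_{u}+g_{w}=\mu_{0}^{2}$ and $f_{u}g_{w}=f_{w}g_{u}$. Hence ${\rm tr}\,B_{0}=A^{2}\mu_{0}^{2}$ and ${\rm det}(B_{0})=A^{4}(f_{u}g_{w}-f_{w}g_{u})=0$, so $\sigma(B_{0})=\{0,\,A^{2}\mu_{0}^{2}\}$; a direct check, using $f_{u}-\mu_{0}^{2}+g_{w}=0$ and $g_{u}f_{w}=f_{u}g_{w}$, shows $(f_{w},g_{w})^{T}$ lies in the kernel of $B_{0}-A^{2}\mu_{0}^{2}I$, and it is nonzero because $g_{w}=-c_{0}^{2}\neq0$. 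Therefore $\lambda_{0}:=A^{2}\mu_{0}^{2}>0$ is an eigenvalue of $\mathcal{F}_{0}$ with the constant eigenfunction $(f_{w}(P_{0},\alpha_{0}),g_{w}(P_{0},\alpha_{0}))^{T}$.

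For part (ii) I would show that $\lambda_{0}$ is carried by the $n=0$ block alone, and simply. Since $\mu_{0},A>0$, the two eigenvalues $0$ and $A^{2}\mu_{0}^{2}$ of $B_{0}$ are distinct, so $\lambda_{0}$ is an algebraically and geometrically simple eigenvalue of $B_{0}$. For $n\neq0$ set $\zeta=2\pi {\bf i} n\neq0$ and substitute $f_{u}=\mu_{0}^{2}+c_{0}^{2}$, $g_{w}=-c_{0}^{2}$ and $f_{w}g_{u}=f_{u}g_{w}=-c_{0}^{2}f_{u}$ into ${\rm det}(B_{n}-\lambda_{0}I)$; after simplification this becomes
\begin{eqnarray*}
{\rm det}(B_{n}-\lambda_{0}I)=\zeta\left(-c_{0}A\,\zeta^{2}-A^{2}\mu_{0}^{2}\,\zeta+c_{0}A^{3}\mu_{0}^{2}\right),
\end{eqnarray*}
and the quadratic factor has real part $c_{0}A\big((2\pi n)^{2}+A^{2}\mu_{0}^{2}\big)\neq0$ since $c_{0}\neq0$ and $A>0$. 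Thus ${\rm det}(B_{n}-\lambda_{0}I)\neq0$ for every $n\neq0$, so $B_{n}-\lambda_{0}I$ is invertible there and contributes nothing to the (generalized) eigenspace; combining the two cases, the algebraic and geometric multiplicities of $\lambda_{0}$ as an eigenvalue of $\mathcal{F}_{0}$ are both equal to one.

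The routine parts are (i) and the $n=0$ half of (ii); the delicate point I expect is the non-collision assertion in (ii), namely that no Fourier mode $n\neq0$ carries $\lambda_{0}$. This is exactly where the fold-Hopf identities of Lemma \ref{lm-FH-pt} are indispensable: the relation $f_{u}g_{w}=f_{w}g_{u}$ — equivalently, $0$ being an eigenvalue of the Jacobian $J(P_{0})$ — is what forces ${\rm det}(B_{n}-\lambda_{0}I)$ to factor through $\zeta$ and leaves a quadratic whose real part has a fixed sign, so the argument would not close without it.
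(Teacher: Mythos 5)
Your proof is correct, and it reaches the conclusion by a genuinely different route from the paper. The paper rescales to $A=1$, rewrites $\mathcal{F}_{0}(u,w)^{T}=\lambda(u,w)^{T}$ as the first-order system $Y'=A_{0}(\lambda)Y$ with $Y(1)=Y(0)$, exhibits the constant solution $(f_{w},0,g_{w})^{T}$ at $\lambda=\mu_{0}^{2}$ from $\det A_{0}(\lambda)=-\tfrac{1}{c_{0}}\lambda(\lambda-\mu_{0}^{2})$, and then gets multiplicity one from the Evans function $E(\lambda,0)=\prod_{j}(e^{z_{j}(\lambda)}-1)$: at $\lambda=\mu_{0}^{2}$ the spatial exponents are $z_{1}=0$ and $z_{2,3}$ real and nonzero, so $E(\mu_{0}^{2},0)=0$ with $\partial_{\lambda}E(\mu_{0}^{2},0)\neq 0$, and algebraic simplicity follows from the simple-zero criterion of Kapitula--Promislow. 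Your Fourier-block decomposition replaces that criterion with the direct-sum structure of the Dunford projections over the blocks $B_{n}$, which is the standard (and correct) way to handle a constant-coefficient operator on $L^{2}_{\rm per}$; the price is that you must justify that the resolvent acts mode-by-mode and that only finitely many blocks meet a small circle around $\lambda_{0}$, which your growth remark about $\sigma(B_{n})$ does supply. The two ``non-collision'' checks are in fact the same computation in different coordinates: your quadratic $-c_{0}A\zeta^{2}-A^{2}\mu_{0}^{2}\zeta+c_{0}A^{3}\mu_{0}^{2}$ in $\zeta=2\pi{\bf i}n$ is (up to normalization) the paper's quadratic factor of $G(z,\mu_{0}^{2})$ evaluated at $z=2\pi{\bf i}n$, and your observation that its real part $c_{0}A((2\pi n)^{2}+A^{2}\mu_{0}^{2})$ never vanishes is equivalent to the paper's observation that $z_{2,3}(\mu_{0}^{2})$ are real and nonzero, hence never of the form $2\pi{\bf i}n$. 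Your identities $g_{w}=-c_{0}^{2}$, $f_{u}+g_{w}=\mu_{0}^{2}$, $f_{u}g_{w}=f_{w}g_{u}$ are exactly those of Lemma \ref{lm-FH-pt}, and your determinant expansion checks out. What your approach buys is self-containedness (no appeal to the Evans-function multiplicity lemma) and an explicit picture of where in frequency space the unstable eigenvalue lives; what the paper's buys is uniformity with the Evans-function framework already set up in Section \ref{sec-spect} for the perturbed, non-constant-coefficient problem.
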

\begin{proof}
For simplification, we only give the proof for  $A=1$.
For a general  $A>0$, the results in this lemma can be obtained by taking a rescaling $\xi\to \xi/A$ in  $\mathcal{F}_{0}(u,w)^{T}=A^{2}\lambda(u,w)^{T}$.
Set $v=u_{\xi}$ and $Y=(u,v,w)^{T}$.
Then the eigenvalue problem  $\mathcal{F}_{0}(u,w)^{T}=\lambda(u,w)^{T}$ is equivalent to
the following system
\begin{eqnarray} \label{eq-spect-0}
\frac{d Y}{d \xi}=A_{0}(\lambda)Y, \ \ \
A_{0}(\lambda)=\left(
\begin{array}{ccc}
0 & 1 & 0\\
\lambda-f_{u}(P_{0},\alpha_{0}) & c_{0} & -f_{w}(P_{0},\alpha_{0})\\
\frac{1}{c_{0}}g_{u}(P_{0},\alpha_{0}) & 0 & \frac{1}{c_{0}}(g_{w}(P_{0},\alpha_{0})-\lambda)
\end{array}
\right)
\end{eqnarray}
subject to the periodic boundary condition $Y(1)=Y(0)$.
By Lemma \ref{lm-FH-pt} we have
\begin{eqnarray*}
{\rm det}(A_{0}(\lambda))\!\!\!&=&\!\!\!-\frac{1}{c_{0}}\left(\lambda^{2}
-(f_{u}(P_{0},\alpha_{0})+g_{w}(P_{0},\alpha_{0})\lambda\right.\\
\!\!\!& &\!\!\!\left.+f_{u}(P_{0},\alpha_{0})g_{w}(P_{0},\alpha_{0})
-f_{w}(P_{0},\alpha_{0})g_{u}(P_{0},\alpha_{0})\right)\\
\!\!\!&=&\!\!\!-\frac{1}{c_{0}}\lambda\left(\lambda
-\mu_{0}^{2}\right).
\end{eqnarray*}
Then for $\lambda=\mu_{0}^{2}>0$, system \eqref{eq-spect-0} has a nonzero constant solution
$(f_{w}(P_{0},\alpha_{0}),0,g_{w}(P_{0},\alpha_{0}))^{T}$.
This proves {\bf (i)}.

By  \eqref{cond-2} and substituting $\lambda=\mu_{0}^{2}$ into \eqref{eq-spect-0},
we have that the rank of $A_{0}(\mu_{0}^{2})$ is two. Then
${\rm Ker}(\mathcal{F}_{0}-\mu_{0}^{2} I)$ is one-dimensional and spanned by it eigenfunction $(f_{w}(P_{0},\alpha_{0}),0,g_{w}(P_{0},\alpha_{0}))^{T}$.
Hence, the geometric multiplicity of $\lambda=\mu_{0}^{2}$ is one.
Since  the characteristic polynomial associated with $A_{0}(\lambda)$ is in the form
\begin{eqnarray*}
G(z,\lambda)\!\!\!&:=&\!\!\! z\left(z^{2}-\left(c_{0}+\frac{1}{c_{0}}(g_{w}(P_{0},\alpha_{0})-\lambda)\right)z
             +f_{u}(P_{0},\alpha_{0})+g_{w}(P_{0},\alpha_{0})-2\lambda\right)\\
          \!\!\!& &\!\!\!
          \!\!\!-\frac{1}{c_{0}}\!\left(\lambda^{2}-(f_{u}(P_{0},\alpha_{0})+g_{w}(P_{0},\alpha_{0}))\lambda+\!
          f_{u}(P_{0},\alpha_{0})g_{w}(P_{0},\alpha_{0})-\!f_{w}(P_{0},\alpha_{0})g_{u}(P_{0},\alpha_{0})\right),
\end{eqnarray*}
 by \eqref{cond-2} we have
\begin{eqnarray*}
G(z,\lambda)= z\left(z^{2}+\frac{1}{c_{0}}\lambda z
             +\mu_{0}^{2}-2\lambda\right)
          -\frac{1}{c_{0}}\left(\lambda^{2}-\mu_{0}^{2}\lambda\right).
\end{eqnarray*}
Let $z_{j}(\lambda)$, $j=1,2,3$, denote the zeros of $G(z,\lambda)$ for each $\lambda\in\mathbb{C}$.
Especially, for $\lambda=\mu_{0}^{2}$ the zeros of $G(z,\mu_{0}^{2})$ are
\begin{eqnarray*}
z_{1}(\mu_{0}^{2})=0,\ \ \ \ \ z_{2,3}(\mu_{0}^{2})=-\frac{1}{2c_{0}}\mu_{0}^{2}\pm\sqrt{\mu_{0}^{2}+\frac{1}{4c_{0}^{2}}\mu_{0}^{4}}\neq 0,
\end{eqnarray*}
and $z'_{1}(\mu_{0}^{2})=-\frac{1}{c_{0}}$.
Then the Evans function $E(\lambda,\mu)$ in \eqref{df-evans} satisfies
\begin{eqnarray*}
E(\lambda,0)={\rm det}(e^{A_{0}(\lambda)}-I_{3})=\prod_{j=1}^{3}(e^{z_{j}(\lambda)}-1).
\end{eqnarray*}
Hence, $E(\mu_{0}^{2},0)=0$ and
\begin{eqnarray*}
\frac{\partial E}{\partial \lambda}(\mu_{0}^{2},0)
=z'_{1}(\mu_{0}^{2})\cdot e^{z_{1}(\mu_{0}^{2})}\cdot(e^{z_{2}(\mu_{0}^{2})}-1)\cdot(e^{z_{3}(\mu_{0}^{2})}-1)\neq 0.
\end{eqnarray*}
This together with \cite[Lemma 8.4.1]{Kapitula-Promislow-13} yields {\bf (ii)}.
Thus, the proof is now complete.
\end{proof}

By Lemmas \ref{lm-spect-pert} and \ref{lm-orig-operator},
we have the following theorem.

\begin{theorem}  \label{thm-stab-PTW} {\rm (Instability of periodic traveling waves)}
Assume that there exists  a sufficiently small $\epsilon_{0}>0$,
and two continuous functions  $\alpha_{1}(\epsilon)$ and $c_{1}(\epsilon)$
with $\alpha_{1}(\epsilon)=O(\epsilon)$ and $c_{1}(\epsilon)=O(\epsilon)$
for $\epsilon\in (0,\epsilon_{0}]$
such that a small-amplitude periodic solution $(\phi,\phi_{\xi},\psi)$ with period $A_{\epsilon}$
bifurcates from a fold-Hopf point $P_{0}$
in the 3D system \eqref{3D-SYSTEM} with $(\alpha,c)=(\alpha_{0}+\alpha_{1}(\epsilon),c_{0}+c_{1}(\epsilon))$
for each $\epsilon\in(0,\epsilon_{0}]$,
where
\begin{eqnarray*}
|(\phi,\phi_{\xi},\psi)-P_{0}|=O(\epsilon),\ \ \ \ A_{\epsilon}=A+O(\epsilon), \ \ \ A>0.
\end{eqnarray*}
Then there exists a constant $\tilde{\epsilon}_{0}$ with $0<\tilde{\epsilon}_{0}\leq \epsilon_{0}$
such that for each $\epsilon$ with $0<\epsilon\leq \tilde{\epsilon}_{0}$,
the corresponding periodic traveling wave $(\phi,\psi)$ with wave speed $c_{0}+c_{1}(\epsilon)$
in the system \eqref{PDE-ODE} with $\alpha_{0}+\alpha_{1}(\epsilon)$ are spectrally unstable.
\end{theorem}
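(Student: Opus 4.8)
\textit{Proof proposal.} The plan is to reduce spectral instability of the periodic wave to the existence of the positive eigenvalue $\lambda=A^{2}\mu_{0}^{2}$ of the constant-coefficient operator $\mathcal{F}_{0}$, which is provided by Lemma \ref{lm-orig-operator}, and then to follow this eigenvalue under the relatively bounded perturbation $\mathcal{F}_{1}(\epsilon)$. By \eqref{spect-decomp} it suffices to exhibit, for each small $\epsilon>0$, a point of $\sigma_{\rm pt}(\mathcal{L}_{0})$ lying in the open right half-plane $\{\mathrm{Re}\,\lambda>0\}$, since $\sigma_{\rm pt}(\mathcal{L}_{0})\subseteq\sigma(\mathcal{L})$. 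After the rescaling $\xi\to A_{\epsilon}\xi$, Lemma \ref{lm-spect-1} shows that this amounts to producing an eigenvalue $\lambda(\epsilon)$ of $\mathcal{F}(\epsilon)$ with $\mathrm{Re}\,\lambda(\epsilon)>0$, because then $\lambda(\epsilon)/A_{\epsilon}^{2}\in\sigma_{\rm pt}(\mathcal{L}_{0})$ has positive real part as well ($A_{\epsilon}^{2}>0$).

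First I would invoke Lemma \ref{lm-orig-operator}: $\lambda_{0}:=A^{2}\mu_{0}^{2}>0$ is an isolated eigenvalue of $\mathcal{F}_{0}$ of algebraic multiplicity one. I would then fix a small positively oriented cycle $\Gamma\subset\rho(\mathcal{F}_{0})$ around $\lambda_{0}$, chosen small enough that the closed disk it bounds is contained in $\{\mathrm{Re}\,\lambda>0\}$ and meets $\sigma(\mathcal{F}_{0})$ only in $\lambda_{0}$; this choice is possible precisely because $\lambda_{0}>0$ is isolated in $\sigma(\mathcal{F}_{0})$. Applying Lemma \ref{lm-spect-pert} — whose hypotheses hold by the relative boundedness established in Lemma \ref{lm-relat-bdd} together with the limits \eqref{lmt} — yields a constant $\tilde\epsilon_{0}\in(0,\epsilon_{0}]$ such that for every $\epsilon\in(0,\tilde\epsilon_{0}]$ the operator $\mathcal{F}(\epsilon)$ has eigenvalues inside $\Gamma$ whose total algebraic multiplicity equals $m(\lambda_{0})=1$; hence there is exactly one such eigenvalue $\lambda(\epsilon)$, it is simple, and $\lambda(\epsilon)\to\lambda_{0}$ as $\epsilon\to0$. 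Since $\lambda(\epsilon)$ lies inside $\Gamma$, it satisfies $\mathrm{Re}\,\lambda(\epsilon)>0$.

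Finally I would unwind the two reductions: $\lambda(\epsilon)/A_{\epsilon}^{2}$ is an eigenvalue of $\mathcal{L}_{0}$ by Lemma \ref{lm-spect-1}, hence an element of $\sigma_{\rm pt}(\mathcal{L}_{0})\subseteq\sigma(\mathcal{L})$ with positive real part, so the periodic traveling wave $(\phi,\psi)$ with wave speed $c_{0}+c_{1}(\epsilon)$ and parameter $\alpha_{0}+\alpha_{1}(\epsilon)$ is spectrally unstable for each $\epsilon\in(0,\tilde\epsilon_{0}]$. Note that the hypotheses of the theorem — the $O(\epsilon)$ estimates on $\alpha_{1}$, $c_{1}$, $(\phi,\phi_{\xi},\psi)-P_{0}$ and $A_{\epsilon}-A$ — are exactly what \eqref{est-eps} and Lemma \ref{lm-relat-bdd} need, so no additional work is required there.

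The genuinely substantive ingredients — verifying that $\mathcal{F}_{1}(\epsilon)$ is relatively bounded with respect to $\mathcal{F}_{0}$ with relative bounds tending to $0$ (Lemma \ref{lm-relat-bdd}, Corollary \ref{cor-bdd}), the resolvent and spectral-projection convergence as $\epsilon\to0$ (Lemma \ref{lm-spect-pert}), and the identification of the positive eigenvalue of $\mathcal{F}_{0}$ (Lemma \ref{lm-orig-operator}) — have already been carried out, so at this stage the proof is essentially an assembly of these facts. The only step that needs slight care is the choice of the contour $\Gamma$: it must simultaneously isolate $\lambda_{0}$ within $\sigma(\mathcal{F}_{0})$ and remain strictly inside the right half-plane, so that persistence of the eigenvalue immediately forces $\mathrm{Re}\,\lambda(\epsilon)>0$ without any further estimate on the location of $\lambda(\epsilon)$.
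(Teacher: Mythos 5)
Your proposal is correct and follows essentially the same route as the paper's own proof: invoke Lemma \ref{lm-orig-operator} for the simple positive eigenvalue $A^{2}\mu_{0}^{2}$ of $\mathcal{F}_{0}$, enclose it by a small cycle $\Gamma$ in the open right half-plane, apply the spectral perturbation result of Lemma \ref{lm-spect-pert} to obtain a nearby eigenvalue of $\mathcal{F}(\epsilon)$ inside $\Gamma$, and transfer it back to $\sigma(\mathcal{L})$ via Lemma \ref{lm-spect-1} and \eqref{spect-decomp}. Your added remark about choosing $\Gamma$ strictly inside $\{\mathrm{Re}\,\lambda>0\}$ is exactly the point the paper relies on, so no gap remains.
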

\begin{proof}
By Lemma \ref{lm-orig-operator}, we have that
$\mathcal{F}_{0}$ has an positive eigenvalue $\lambda_{0}=A^{2}\mu_{0}^{2}$,
whose algebraic multiplicity is one.
Let $\Gamma$ in ${\rm Re}\lambda>0$  be a small cycle surrounding $A^{2}\mu_{0}^{2}$
such that $\lambda_{0}=A^{2}\mu_{0}^{2}$ is the only spectrum point inside $\Gamma$.
By Lemma \ref{lm-spect-pert},
there exists a constant $\tilde{\epsilon}_{0}$ with $0<\tilde{\epsilon}_{0}\leq \epsilon_{0}$
such that for each $\epsilon$ in $(0,\tilde{\epsilon}_{0}]$,
there exists a unique eigenvalue $\lambda_{1}(\epsilon)$ of $\mathcal{F}(\epsilon)$ inside $\Gamma$
and $\lambda_{1}(\epsilon)\to \lambda_{0}$ as $\epsilon\to 0$.
Then $\sigma(\mathcal{F}(\epsilon))$ has an eigenvalue $\lambda_{1}(\epsilon)$ with positive real part
for $\epsilon\in(0,\tilde{\epsilon}_{0}]$.
This together with Lemma \ref{lm-spect-1}
yields that the periodic traveling wave $(\phi,\psi)$ is spectrally unstable.
Therefore, the proof is now complete.
\end{proof}

As a corollary of  Theorem \ref{thm-stab-PTW},
we have the stability of periodic traveling waves obtained in Theorem \ref{thm-existence}.

\begin{corollary}
Assume that $(\phi,\psi)$ is a periodic traveling wave obtained as in Theorem \ref{thm-existence}.
Then this periodic traveling wave  is spectrally unstable.
\end{corollary}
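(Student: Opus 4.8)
The plan is simply to recognize that the periodic traveling wave produced by Theorem \ref{thm-existence} is of exactly the type covered by Theorem \ref{thm-stab-PTW}, and then to invoke the latter. So the proof is a matter of checking that the conclusions of Theorem \ref{thm-existence} supply all the hypotheses of Theorem \ref{thm-stab-PTW}.

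First I would recall that in Theorem \ref{thm-existence} one assumes, without loss of generality, that the $3$D system \eqref{3D-SYSTEM} has a fold-Hopf equilibrium at the origin for $(\alpha,c)=(\alpha_{0},c_{0})$; this furnishes the fold-Hopf point $P_{0}=0$ needed in Theorem \ref{thm-stab-PTW}. Next, the parameter paths in Theorem \ref{thm-existence} are $\alpha=\alpha_{0}+\alpha_{1}(\epsilon)$ and $c=c_{0}+c_{1}(\epsilon)$, where $\alpha_{1}(\epsilon)$ and $c_{1}(\epsilon)$ are the convergent power series \eqref{eq-series} with no constant term; hence they are continuous near $\epsilon=0$ and satisfy $\alpha_{1}(\epsilon)=O(\epsilon)$, $c_{1}(\epsilon)=O(\epsilon)$, as required. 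Finally, Theorem \ref{thm-existence} delivers a periodic solution $(\phi,\phi_{\xi},\psi)$ of \eqref{3D-SYSTEM} with $|(\phi,\phi_{\xi},\psi)|=O(\epsilon)$ — equivalently $|(\phi,\phi_{\xi},\psi)-P_{0}|=O(\epsilon)$ since $P_{0}=0$ — and with period $A_{\epsilon}=2\pi/\mu_{0}+O(\epsilon)$; taking $A=2\pi/\mu_{0}$, which is strictly positive because $\mu_{0}>0$ by the definition of a fold-Hopf equilibrium, we obtain $A_{\epsilon}=A+O(\epsilon)$ with $A>0$.

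With every hypothesis of Theorem \ref{thm-stab-PTW} in place, that theorem yields a constant $\tilde{\epsilon}_{0}\in(0,\epsilon_{0}]$ such that for each $\epsilon\in(0,\tilde{\epsilon}_{0}]$ the periodic traveling wave $(\phi,\psi)$ with speed $c_{0}+c_{1}(\epsilon)$ in \eqref{PDE-ODE} with parameter $\alpha_{0}+\alpha_{1}(\epsilon)$ is spectrally unstable, which is the claim. I do not anticipate any genuine obstacle here: the corollary is essentially a bookkeeping statement, and the only point requiring a one-line justification is that the leading-order period $2\pi/\mu_{0}$ is positive, which is immediate from $\mu_{0}>0$.
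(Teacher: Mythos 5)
Your proposal is correct and matches the paper's intent exactly: the paper states this corollary without a separate proof, presenting it as an immediate consequence of Theorem \ref{thm-stab-PTW}, and your verification that the conclusions of Theorem \ref{thm-existence} (fold-Hopf point at the origin, $O(\epsilon)$ parameter paths, $O(\epsilon)$ amplitude, period $A_{\epsilon}=2\pi/\mu_{0}+O(\epsilon)$ with $2\pi/\mu_{0}>0$) supply precisely the hypotheses of that theorem is the intended argument.
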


\section{Applications to the FitzHugh-Nagumo system}
\label{sec-app}

In this section,
we only apply the main results to the FitzHugh-Nagumo system.
By the similar method,
one can study the sufficient conditions under which small-amplitude periodic traveling waves
bifurcate from fold-Hopf equilibria in caricature calcium models \cite{Tsai-etal-2012,Zhang-Sneyd-Tsai-14},
consumer-resource models \cite{He-Lam-19,Zhang-etal-17} and so on,
and rigorously prove the spectral instability for these traveling wave solutions.

\subsection{Mathematical model}

We follow \cite{Champneys-etal07} and study  the FitzHugh-Nagumo system with an applied current
\begin{eqnarray} \label{FHN-PDE}
\begin{aligned}
\frac{\partial u}{\partial t} &=d u_{xx}+h(u)-w+p,\\
\frac{\partial w}{\partial t} &=\delta(u-\gamma w),
\end{aligned}
\end{eqnarray}
where $x$ is a one-dimensional spatial variable, $t$ is time,
the state $u$ represnets the plasma membrane electric potential with the diffusion rate $d$,
the state $w$ describes the combined inactivation effects of potassium and sodium ion channels,
$p$ is the applied current,  $\delta$ is
the ratio of the time scale of the membrane potential to that of the channels,
$h$ is a bistable function in the form
$$h(u)=u(u-1)(\alpha-u),$$
and all system parameters  are positive.

In the moving coordinate frame $(\xi,t)=(x+ct,t)$,
a periodic wave profile $(\phi(\cdot),\psi(\cdot))$ is a periodic steady state of the following system
\begin{eqnarray} \label{eq-TW-1}
\begin{aligned}
u_{t}&=d u_{\xi\xi}-cu_{\xi}+h(u)-w+p,\\
w_{t}&=-cw_{\xi}+\delta (u-\gamma w).
\end{aligned}
\end{eqnarray}
Set $v=u_{\xi}$. Then the vector $(\phi,\phi_{\xi},\psi)$ is a periodic solution of
the three-dimensional ordinary differential system
\begin{eqnarray} \label{3D-FHN}
\begin{aligned}
\frac{d u}{d \xi} &= \dot u  = v,
\\
\frac{d v}{d \xi} &= \dot v  =\frac{1}{d}\left( -h(u)+cv+w-p\right),
\\
\frac{d w}{d \xi} &= \dot w  = \frac{\delta}{c} (u-\gamma w).
\end{aligned}
\end{eqnarray}
For convenience, we call \eqref{3D-FHN} the {\it 3D-FHN system}.
Throughout this paper,
the values of the parameters $\alpha$, $d$ and $\delta$ are fixed as they were given in
\cite{Champneys-etal07,Czechowski-Piotr-16,Guck-Kuehn-09}, that is,
\begin{eqnarray} \label{val-param}
\alpha=0.1,\ \ \ d=5.0,\ \ \  \delta=0.01.
\end{eqnarray}
Then the 3D-FHN system \eqref{3D-FHN} depends on three parameters $c$, $\gamma$ and $p$.

\subsection{Fold-Hopf equilibrium}

If $(u_{0},v_{0},w_{0})^{T}$ in $\mathbb{R}^{3}$ is an equilibrium of the 3D-FHN system \eqref{3D-FHN},
then $(u_{0},v_{0},w_{0})^{T}$ satisfies the equations:
\begin{eqnarray*}
v_{0}=0,\ \ \ h(u_{0})+p=w_{0},\ \ \ u_{0}=\gamma w_{0}.
\end{eqnarray*}
This yields that  the equilibria of the 3D-FHN system \eqref{3D-FHN} are determined by the roots of equation
$$g(u,\gamma)=p,\ \ \ \ \  u\in\mathbb{R},\ \ \gamma,\ p>0,$$
where
\begin{eqnarray} \label{eq-imp}
g(u,\gamma):=\frac{1}{\gamma}u-h(u)=u^3-1.1 u^2+\left(0.1+\frac{1}{\gamma}\right)u.
\end{eqnarray}
For each $\gamma,p>0$,
the results on the zeros of $g(u,\lambda)=p$ are summarized as follows.
\begin{lemma} \label{lm-zero}
Let $N(\gamma,p)$ denote the number of the zeros for $g(u,\gamma)=p$ with $\gamma>0$ and $p>0$.
Then the following statements hold:
\begin{itemize}
\item[{\bf (i)}]
If $\gamma$ satisfies $0<\gamma\leq \frac{300}{91}$,
then $N(\gamma,p)=1$ for every $p>0$.

\item[{\bf (ii)}]
If $\gamma$ satisfies $\gamma>\frac{300}{91}$,
then $g_{u}(u,\gamma)=0$ has precisely two positive roots $u_{1}(\gamma)$ and $u_{2}(\gamma)$
with $0<u_{1}(\gamma)<u_{2}(\gamma)$ for each $\gamma>\frac{300}{91}$.
Let $u_{0}(\gamma)$ and $u_{3}(\gamma)$ be defined by
\begin{eqnarray*}
g(u_{0}(\gamma))=g(u_{2}(\gamma)),& \ \ \ &u_{0}(\gamma)<u_{1}(\gamma),\\
g(u_{3}(\gamma))=g(u_{1}(\gamma)),& \ \ \ &u_{3}(\gamma)>u_{2}(\gamma).
\end{eqnarray*}
Moreover, the following dichotomies hold:
\begin{itemize}
\item[{\bf (ii.1)}]
If $\gamma$ satisfies $\frac{300}{91}<\gamma<\frac{400}{81}$,
then $N(\gamma,p)=1$  for $p>g(u_{1}(\gamma))$ and $0<p<g(u_{2}(\gamma))$,
$N(\gamma,p)=2$  for $p=g(u_{1}(\gamma))$ and $p=g(u_{2}(\gamma))$,
and $N(\gamma,p)=3$  for $g(u_{2}(\gamma))<p<g(u_{1}(\gamma))$.

\item[{\bf (ii.2)}]
If $\gamma$ satisfies $\gamma\geq\frac{400}{81}$,
then $N(\gamma,p)=1$  for $p>g(u_{1}(\gamma))$,
$N(\gamma,p)=2$  for  $p=g(u_{1}(\gamma))$,
and $N(\gamma,p)=3$  for $0<p<g(u_{1}(\gamma))$.
\end{itemize}
\end{itemize}
\end{lemma}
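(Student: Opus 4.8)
The plan is to reduce the statement to the elementary shape analysis of the cubic $u\mapsto g(u,\gamma)$ with $\gamma>0$ fixed, together with a sign analysis of its two critical values. First I would compute $g_{u}(u,\gamma)=3u^{2}-2.2\,u+0.1+1/\gamma$, a quadratic in $u$ with discriminant $3.64-12/\gamma$; this is nonpositive precisely when $0<\gamma\le 300/91$ and positive when $\gamma>300/91$. If $0<\gamma\le 300/91$ then $g_{u}\ge 0$, so $g(\cdot,\gamma)$ is a strictly increasing bijection of $\mathbb{R}$ onto $\mathbb{R}$; hence $g(u,\gamma)=p$ has exactly one solution for every $p$, which proves {\bf (i)}. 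If $\gamma>300/91$, then $g_{u}=0$ has two roots $u_{1}(\gamma)<u_{2}(\gamma)$, both positive because their sum $2.2/3$ and product $(0.1+1/\gamma)/3$ are positive; consequently $g(\cdot,\gamma)$ increases on $(-\infty,u_{1}]$, decreases on $[u_{1},u_{2}]$, and increases on $[u_{2},\infty)$, so $u_{1}$ is a local maximum, $u_{2}$ a local minimum, and $g(u_{1}(\gamma),\gamma)>g(u_{2}(\gamma),\gamma)$. The points $u_{0}(\gamma)<u_{1}(\gamma)$ and $u_{3}(\gamma)>u_{2}(\gamma)$ are then well defined by the intermediate value theorem applied on the two monotone outer branches. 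The standard count for such a cubic now reads: $N(\gamma,p)=1$ if $p<g(u_{2}(\gamma),\gamma)$ or $p>g(u_{1}(\gamma),\gamma)$; $N(\gamma,p)=2$ if $p$ equals one of these two values; and $N(\gamma,p)=3$ if $g(u_{2}(\gamma),\gamma)<p<g(u_{1}(\gamma),\gamma)$.

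To locate the thresholds in {\bf (ii.1)} and {\bf (ii.2)}, I would next determine the signs of $g(u_{1}(\gamma),\gamma)$ and $g(u_{2}(\gamma),\gamma)$ as functions of $\gamma>300/91$. Implicit differentiation together with $g_{u}(u_{j}(\gamma),\gamma)=0$ gives the envelope identity
\[
\frac{d}{d\gamma}\,g(u_{j}(\gamma),\gamma)=g_{\gamma}(u_{j}(\gamma),\gamma)=-\frac{u_{j}(\gamma)}{\gamma^{2}}<0,\qquad j=1,2,
\]
so both critical values are strictly decreasing on $(300/91,\infty)$. Writing $g(u,\gamma)=u\,(u^{2}-1.1\,u+0.1+1/\gamma)$ and solving $g_{u}(u,\gamma)=0$ together with $u^{2}-1.1\,u+0.1+1/\gamma=0$ forces $u=0.55$ and $1/\gamma=0.2025$, i.e. $\gamma=400/81$; hence $g(u_{2}(\gamma),\gamma)=0$ exactly at $\gamma=400/81$, and by the monotonicity above $g(u_{2}(\gamma),\gamma)>0$ for $300/91<\gamma<400/81$ and $g(u_{2}(\gamma),\gamma)<0$ for $\gamma>400/81$. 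For the other critical value, $\gamma\mapsto g(u_{1}(\gamma),\gamma)$ is strictly decreasing with limit $u_{1}^{3}-1.1\,u_{1}^{2}+0.1\,u_{1}>0$ as $\gamma\to\infty$, evaluated at $u_{1}=(2.2-\sqrt{3.64})/6$, so $g(u_{1}(\gamma),\gamma)>0$ for all $\gamma>300/91$.

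Finally I would feed these signs into the three-part count, keeping the constraint $p>0$. For $300/91<\gamma<400/81$ one has $0<g(u_{2}(\gamma),\gamma)<g(u_{1}(\gamma),\gamma)$, so every positive $p$ falls into one of the five sub-ranges $0<p<g(u_{2})$, $p=g(u_{2})$, $g(u_{2})<p<g(u_{1})$, $p=g(u_{1})$, $p>g(u_{1})$, and the count reproduces {\bf (ii.1)}. For $\gamma\ge 400/81$ one has $g(u_{2}(\gamma),\gamma)\le 0<g(u_{1}(\gamma),\gamma)$, so $p>g(u_{2}(\gamma),\gamma)$ holds automatically for every $p>0$, the sub-cases $p=g(u_{2})$ and $p<g(u_{2})$ are vacuous, and what remains is exactly {\bf (ii.2)}.

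I expect the only genuinely delicate point to be the sign analysis of the two critical values in the second step --- in particular isolating $\gamma=400/81$ from the coupled equations $g_{u}=0$ and $g=0$, and using the envelope identity to promote this single value into a global sign rather than merely a local one. Everything else is routine bookkeeping for one cubic with a single hump, and I expect the write-up to be short once the envelope identity is in hand.
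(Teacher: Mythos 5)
Your proposal is correct and follows the same route as the paper: the paper's own proof merely records the discriminant computation for $g_{u}$ (giving the threshold $\gamma=300/91$), notes that $g$ has a zero of multiplicity two at $\gamma=400/81$, and leaves the rest to ``a standard analysis,'' which is exactly the shape-and-sign analysis you carry out in detail. The only micro-step worth making explicit is that the critical point $u=0.55$ obtained from the coupled system $g=g_{u}=0$ at $\gamma=400/81$ is indeed the larger root $u_{2}$ (the roots of $g_u$ there are $(2.2\pm 1.1)/6$), although your separate positivity argument for $g(u_{1}(\gamma),\gamma)$ already forces this identification.
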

%
%
%
\begin{proof}
Note that the first-order partial derivative of $g$ with respect to $u$ is in the form
\begin{eqnarray*}
g_{u}(u,\gamma)=3u^{2}-2.2u+0.1+\frac{1}{\gamma}.
\end{eqnarray*}
Thus, $g_{u}(u,\gamma)\geq 0$ for all $u\geq 0$ and $0<\gamma\leq \frac{300}{91}$.
This yields {\bf (i)}.

For each $\gamma>\frac{300}{91}$, equation $g_{u}(u,\gamma)=0$ has precisely two zeros
$u_{1}(\gamma)$ and $u_{2}(\gamma)$ with $u_{1}(\gamma)<u_{2}(\gamma)$,
and $g(u,\gamma)$ has a zero of multiplicity two for $\gamma=\frac{400}{81}$.
Hence $u_{0}(\gamma)$ and $u_{3}(\gamma)$ are well-defined,
and the statements in {\bf (ii)} can be obtained by a standard analysis.
Thus, the proof is complete.
\end{proof}

We next study the conditions under which an equilibrium is a fold-Hopf equilibrium.
If the 3D-FHN system \eqref{3D-FHN} with \eqref{val-param} has
an equilibrium $(u_{0},v_{0},w_{0})^{T}$ in $\mathbb{R}^{3}$,
then $g(u_{0},\gamma)=p$.
Clearly, the Jacobian matrix $J(u_{0},v_{0},w_{0})$ at this point is in the form
\begin{equation*}
J(u_{0},v_{0},w_{0})=\left(
\begin{array}{ccc}
0 & 1 & 0\\
-\frac{1}{d}h'(u_{0}) & \frac{c}{d} & \frac{1}{d}\\
\frac{\delta}{c} & 0 & -\frac{\delta \gamma}{c}
\end{array}
\right),
\end{equation*}
and the eigenvalues $\lambda_{j}$, $j=1,2,3$,  of $J(u_{0},v_{0},w_{0})$ are determined by the zeros of its  characteristic polynomial
\begin{eqnarray*}
&&\lefteqn{{\rm det}(\lambda I_{3}-J(u_{0},v_{0},w_{0}))}\\
&&=\lambda^{3}+\left(\frac{\delta\gamma}{c}-\frac{c}{d}\right)\lambda^{2}
   +\frac{1}{d}\left(h'(u_{0})-\delta \gamma\right)\lambda
   +\frac{\delta}{cd}\left(\gamma h'(u_{0})-1)\right),
\end{eqnarray*}
where $I_{3}$ is the $3\times 3$ identity matrix.
Assume that $(u_{0},v_{0},w_{0})^{T}$ is a fold-Hopf equilibrium.
Then without loss of generality, we assume $\lambda_{1}=0$ and
$\lambda_{2,3}=\pm\, {\bf i}\mu_{0}$ for some $\mu_{0}>0$.
Then
\begin{eqnarray*}
\gamma h'(u_{0})-1=0, \ \ \
\sum_{j=1}^{3}\lambda_{j}=\frac{\delta\gamma}{c}-\frac{c}{d}=0,\ \ \
\frac{1}{d}(h'(u_{0})-\delta \gamma)=\mu_{0}^{2}>0.
\end{eqnarray*}
This yields that if  $(u_{0},v_{0},w_{0})^{T}$ in $\mathbb{R}^{3}$ is a fold-Hopf equilibrium,
then $(c,\gamma,p)$ satisfies
\begin{eqnarray} \label{cond-FH-point}
\begin{aligned}
&&g(u_{0},\gamma)=p,\ \ \ \frac{1}{\gamma}=h'(u_{0})>0.01\gamma>0, \\
&&c=\sqrt{0.05\gamma},
\ \ \ \mu_{0}^{2}=\frac{1}{500\gamma}(100-\gamma^{2})>0.
\end{aligned}
\end{eqnarray}
More properties on the fold-Hopf equilibria of the 3D-FHN system \eqref{3D-FHN} are summarized as follows.
\begin{lemma} \label{lm-FHP}
Let $c=\sqrt{0.05\gamma}$ for each $\gamma>0$.
Then the following statements hold:
\begin{itemize}
\item[{\bf (i)}] For $0<\gamma<\frac{300}{91}$ and $\gamma\geq 10$,
the 3D-FHN system \eqref{3D-FHN} has no fold-Hopf equilibria.

\item[{\bf (ii)}] For $\gamma=\frac{300}{91}$,
the 3D-FHN system \eqref{3D-FHN} has precisely one fold-Hopf equilibrium at $(\frac{11}{30},0, \frac{1001}{9000})^{T}$,
and $p$ takes the value $\frac{1331}{27000}$.

\item[{\bf (iii)}]
For $\frac{300}{91}<\gamma<\frac{400}{81}$,
the 3D-FHN system \eqref{3D-FHN} has precisely two fold-Hopf equilibria at
$(u_{1}(\gamma),0,u_{1}(\gamma)/\gamma)^{T}$ for $p=g(u_{1}(\gamma),\gamma)$
and $(u_{2}(\gamma),0,u_{2}(\gamma)/\gamma)^{T}$ for $p=g(u_{2}(\gamma),\gamma)$, respectively,
where $u_{1}(\gamma)$ and $u_{2}(\gamma)$ are defined as in  {\bf (ii)} of Lemma \ref{lm-zero}.

\item[\bf (iv)] For $\frac{400}{81}\leq \gamma<10$,
the 3D-FHN system \eqref{3D-FHN} has precisely one fold-Hopf equilibrium at $(u_{1}(\gamma),0,u_{1}(\gamma)/\gamma)^{T}$ for $p=g(u_{1}(\gamma),\gamma)$,
where $u_{1}(\gamma)$ is defined as in {\bf (ii)} of Lemma \ref{lm-zero}.
\end{itemize}
\end{lemma}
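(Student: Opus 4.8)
The plan is to reduce the defining system \eqref{cond-FH-point} to statements about the cubic $g(\cdot,\gamma)$ together with the sign of the resulting current $p$, and then to run through the four cases using the elementary calculus of this cubic and Lemma \ref{lm-zero}.

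First I would observe that, since $g_u(u,\gamma)=\tfrac{1}{\gamma}-h'(u)$, the fold condition $\gamma h'(u_0)=1$ appearing in \eqref{cond-FH-point} is exactly $g_u(u_0,\gamma)=0$; thus $u_0$ must be a critical point of $g(\cdot,\gamma)$, and \eqref{cond-FH-point} then forces $w_0=u_0/\gamma$, the wave speed $c=\sqrt{0.05\gamma}$ (our standing assumption), and $p=g(u_0,\gamma)$. The remaining content of \eqref{cond-FH-point} is the Hopf requirement $\mu_0^2=(100-\gamma^2)/(500\gamma)>0$, equivalent to $0<\gamma<10$; when $\gamma\geq 10$ the coefficient of $\lambda$ in the characteristic polynomial of $J(u_0,v_0,w_0)$ is nonpositive, so its nonzero roots are real and no fold-Hopf equilibrium can occur. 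Finally, since all model parameters are positive, a genuine fold-Hopf equilibrium additionally requires $p=g(u_0,\gamma)>0$.

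Next I would describe the critical points of $g(\cdot,\gamma)$. The quadratic $g_u(u,\gamma)=3u^2-2.2u+0.1+1/\gamma$ has discriminant $3.64-12/\gamma$, which is negative for $0<\gamma<\tfrac{300}{91}$, zero for $\gamma=\tfrac{300}{91}$ with the single critical point $u_0=11/30$, and positive for $\gamma>\tfrac{300}{91}$ with two critical points $0<u_1(\gamma)<u_2(\gamma)$ — the same $u_1,u_2$ as in Lemma \ref{lm-zero}; since $g_{uu}(u)=6u-2.2$ vanishes at $11/30$, the point $u_1(\gamma)$ is a local maximum and $u_2(\gamma)$ a local minimum of $g(\cdot,\gamma)$. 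This already yields (i): for $0<\gamma<\tfrac{300}{91}$ there is no critical point, hence no fold-Hopf equilibrium, and for $\gamma\geq 10$ the Hopf condition fails. For $\gamma=\tfrac{300}{91}$ the unique candidate is $u_0=11/30$; one has $\tfrac{300}{91}<10$, $w_0=u_0/\gamma=1001/9000$, and $p=g(11/30,\tfrac{300}{91})=1331/27000>0$ by direct substitution, which is (ii).

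It remains, for $\tfrac{300}{91}<\gamma<10$, to decide which of $u_1(\gamma),u_2(\gamma)$ yields $p>0$. Since $g(0,\gamma)=0$, $g(\cdot,\gamma)$ is increasing on $(0,u_1(\gamma))$, and $u_1(\gamma)>0$, we get $g(u_1(\gamma),\gamma)>0$ for all such $\gamma$, so $u_1$ always produces a fold-Hopf equilibrium. For the local-minimum value $g(u_2(\gamma),\gamma)$, note that $g(u,\gamma)=0$ with $u\neq 0$ is equivalent to $u^2-1.1u+0.1+1/\gamma=0$, whose discriminant $0.81-4/\gamma$ is negative exactly for $\gamma<\tfrac{400}{81}$; hence $g(\cdot,\gamma)>0$ on $(0,\infty)$ for $\tfrac{300}{91}<\gamma<\tfrac{400}{81}$, while $g(u_2(\tfrac{400}{81}),\tfrac{400}{81})=0$ and $g(u_2(\gamma),\gamma)<0$ for $\gamma>\tfrac{400}{81}$. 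Combining the two observations gives (iii) for $\tfrac{300}{91}<\gamma<\tfrac{400}{81}$ and (iv) for $\tfrac{400}{81}\leq\gamma<10$. The only mildly delicate point is the bookkeeping in this last step — locating the threshold $\gamma=\tfrac{400}{81}$ via the double-root condition and confirming that the double root coincides with the local minimum $u_2$ — but this is a short explicit computation; the lemma is, at bottom, a careful case analysis of a single cubic.
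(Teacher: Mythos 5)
Your proposal is correct and follows essentially the same route as the paper: reduce the fold condition $\gamma h'(u_0)=1$ to $g_u(u_0,\gamma)=0$, use $1/\gamma>0.01\gamma$ (equivalently $\mu_0^2>0$) to exclude $\gamma\geq 10$, and then sort the critical points $u_1(\gamma),u_2(\gamma)$ according to whether $p=g(u_i(\gamma),\gamma)>0$. The only difference is that you spell out the ``direct computation'' the paper delegates to Lemma \ref{lm-zero} — in particular locating the threshold $\gamma=\tfrac{400}{81}$ via the double root of the quadratic factor $u^2-1.1u+0.1+1/\gamma$ and checking that this root coincides with the local minimum $u_2$ — which is a welcome clarification rather than a departure.
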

\begin{proof}
By the proof for {\bf (i)} of Lemma \ref{lm-zero},
we have that  $\frac{1}{\gamma}>h'(u)$ for each $u>0$ and $\gamma$ with $0<\gamma<\frac{300}{91}$.
Then {\bf (i)} holds for $0<\gamma<\frac{300}{91}$.
Since $\frac{1}{\gamma}\leq 0.01\gamma$ for $\gamma\geq 10$,
the proof for {\bf (i)} is finished by \eqref{cond-FH-point}.

Note that $\frac{1}{\gamma}=h'(u)$ implies  $g'(u,\gamma)=0$.
Then by Lemma \ref{lm-zero} and a direct computation, the remaining statements hold.
Thus, the proof is complete.
\end{proof}

\subsection{Existence and stability of periodic traveling waves}
\label{sec-exmp-thm}

 Before stating the main results in this section,
we remark that Tsai, Zhang, Kirk and Sneyd \cite{Tsai-etal-2012} gave some conditions under
which there exist periodic traveling waves arising from fold-Hopf equilibria in the FitzHugh-Nagumo system \eqref{FHN-PDE}.
However, they did not investigate the stability of these perturbed periodic traveling waves.

By Lemma \ref{lm-averag-1} and  Theorem \ref{thm-stab-PTW},
we can obtain the existence and stability of periodic traveling waves emerging fold-Hopf equilibria.
More precisely, we have  the  following theorem.

\begin{theorem} \label{thm-FHN-periodic}
Let $(\alpha, d,\delta)$ be in the form \eqref{val-param}.
Assume that the 3D-FHN system \eqref{3D-FHN} with $(c,\gamma,p)=(c_{0},\gamma_{0},p_{0})$
has a fold-Hopf equilibrium $(u_{0},v_{0},w_{0})^{T}$ whose linearization has the eigenvalues $\lambda_{1}=0$ and
$\lambda_{2,3}=\pm\, {\bf i}\mu_{0}$ for some $\mu_{0}>0$.
Then  there is a sufficiently small $\epsilon_{0}>0$,
and two continuous functions  $\gamma(\epsilon)$ and $p(\epsilon)$ given by
\begin{eqnarray*}
\gamma(\epsilon)=\gamma_{0}+\gamma_{1}\epsilon+O(\epsilon^{2}), \ \ \ \
p(\epsilon)=p_{0}+O(\epsilon^{3}),
\end{eqnarray*}
for $\epsilon\in (0,\epsilon_{0}]$ and $\gamma_{1}\neq 0$ with $\gamma_{1}(11-30u_{0})>0$,
such that a periodic solution $(\phi,\phi_{\xi},\psi)$ with period
$A_{\epsilon}$ bifurcates from $(u_{0},v_{0},w_{0})^{T}$
in the 3D-FHN system \eqref{3D-FHN}  with $(c,\gamma,p)=(c_{0}+O(\epsilon^{3}),\gamma(\epsilon),p(\epsilon))$
for each $\epsilon\in(0,\epsilon_{0}]$,
and
\begin{eqnarray*}
|(\phi,\phi_{\xi},\psi)-(u_{0},v_{0},w_{0})|=O(\epsilon),\ \ \ \ A_{\epsilon}=2\pi/\mu_{0}+O(\epsilon),
\ \ \ \epsilon\in(0,\epsilon_{0}].
\end{eqnarray*}
Furthermore, the corresponding periodic traveling wave $(\phi,\psi)$ in the FitzHugh-Nagumo system \eqref{FHN-PDE} is spectrally unstable.
\end{theorem}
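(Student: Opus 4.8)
The plan is to specialise the two engines of the paper---the averaging-based existence theorem of Section~\ref{sec-pf-thm-exist} and the perturbative instability Theorem~\ref{thm-stab-PTW}---to the 3D-FHN system \eqref{3D-FHN}. First I would translate the fold-Hopf equilibrium $(u_{0},v_{0},w_{0})$ to the origin and record the simplifications forced by \eqref{cond-FH-point}: $v_{0}=0$, $c_{0}^{2}=0.05\gamma_{0}$, $h'(u_{0})=1/\gamma_{0}$, and $\mu_{0}^{2}=\tfrac{1}{500\gamma_{0}}(100-\gamma_{0}^{2})>0$. Using the explicit Jacobian $J(u_{0},v_{0},w_{0})$ and these identities, I would write down a concrete eigenvector matrix $Q$ in the role of \eqref{df-change}, then introduce the parameter branch $(c,\gamma,p)=(c_{0}+c_{1}(\epsilon),\,\gamma_{0}+\gamma_{1}(\epsilon),\,p_{0}+p_{1}(\epsilon))$ as in \eqref{eq-series}, apply the rescaling $(u,v,w)\mapsto\epsilon Q(u,v,w)$ and pass to cylindrical coordinates, arriving at a planar $2\pi$-periodic system of the form \eqref{2D-1} whose coefficients $X_{j0},X_{j1}$ are explicit polynomials in $u_{0},\gamma_{0}$, the entries of $Q$, and in particular in $h''(u_{0})=2.2-6u_{0}=\tfrac{1}{5}(11-30u_{0})$.

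The heart of the argument is the averaging computation. Imposing that the non-oscillatory part of the first-order perturbation drop out (the hypothesis $X_{10}=X_{20}=X_{30}=0$ of Theorem~\ref{thm-existence}, if one works in that framework, or the analogous preparation before invoking Lemma~\ref{lm-averag-1} directly) fixes the low-order parts of $c_{1}(\epsilon),\gamma_{1}(\epsilon),p_{1}(\epsilon)$; I expect this to leave $\gamma(\epsilon)=\gamma_{0}+\gamma_{1}\epsilon+O(\epsilon^{2})$ as the effective perturbation with $p(\epsilon)=p_{0}+O(\epsilon^{3})$ and $c=c_{0}+O(\epsilon^{3})$, the surviving freedom at $O(\epsilon^{3})$ being spent to make the averaged system nondegenerate. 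I would then compute the order-one averaged functions $R_{1}(r,w),R_{2}(r,w)$ of Theorem~\ref{thm-existence} and solve $R_{1}=R_{2}=0$; I anticipate that the resulting equation for $r_{*}^{2}$ has leading coefficient proportional---up to strictly positive factors built from $\mu_{0}$ and $Q$---to $\gamma_{1}h''(u_{0})=\tfrac{1}{5}\gamma_{1}(11-30u_{0})$, so that an admissible root $r_{*}>0$ exists exactly when $\gamma_{1}(11-30u_{0})>0$, and I would check that the Jacobian determinant \eqref{eq-inv} at $(r_{*},w_{*})$ is nonzero (generic once $r_{*}>0$). Lemma~\ref{lm-averag-1} then delivers the bifurcating periodic solution $(\phi,\phi_{\xi},\psi)$ of \eqref{3D-FHN} with $|(\phi,\phi_{\xi},\psi)-(u_{0},v_{0},w_{0})|=O(\epsilon)$ and $A_{\epsilon}=2\pi/\mu_{0}+O(\epsilon)$, hence the asserted small-amplitude periodic traveling wave of \eqref{FHN-PDE}.

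For the instability conclusion I would merely verify that the hypotheses of Theorem~\ref{thm-stab-PTW} hold for the 3D-FHN system regarded as the special case of \eqref{3D-SYSTEM}: $P_{0}=(u_{0},v_{0},w_{0})$ is a fold-Hopf equilibrium with eigenvalues $0,\pm{\bf i}\mu_{0}$; the perturbation functions $\gamma_{1}(\epsilon),p_{1}(\epsilon),c_{1}(\epsilon)$ are $O(\epsilon)$; the periodic solution of the previous step satisfies $|(\phi,\phi_{\xi},\psi)-P_{0}|=O(\epsilon)$; and $A_{\epsilon}=A+O(\epsilon)$ with $A=2\pi/\mu_{0}>0$, which holds because $\mu_{0}>0$ is part of the fold-Hopf assumption. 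Theorem~\ref{thm-stab-PTW} then produces a constant $\tilde{\epsilon}_{0}\in(0,\epsilon_{0}]$ such that for every $\epsilon\in(0,\tilde{\epsilon}_{0}]$ the periodic traveling wave $(\phi,\psi)$ of \eqref{FHN-PDE} with $\gamma=\gamma(\epsilon)$, $p=p(\epsilon)$ and wave speed $c_{0}+O(\epsilon^{3})$ is spectrally unstable. The genuinely difficult step is the middle one: pushing the algebra of $Q$, the $X_{j1}$'s and $R_{1},R_{2}$ far enough to determine the sign of $r_{*}^{2}$---and thereby the condition $\gamma_{1}(11-30u_{0})>0$---and to confirm \eqref{eq-inv}; once the periodic wave has been constructed, the instability is essentially a direct appeal to Theorem~\ref{thm-stab-PTW}.
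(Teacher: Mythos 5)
Your proposal follows essentially the same route as the paper's proof: translate the fold-Hopf point to the origin, apply an explicit eigenvector change of variables with the $\epsilon$-rescaling, pass to cylindrical coordinates, perturb only $\gamma$ at first order (with $p$ and $c$ adjusted at higher order), compute the first-order averaged functions, locate the zero $(r_{*},w_{*})$ whose admissibility $r_{*}>0$ is governed by the sign of $\gamma_{1}h''(u_{0})\propto\gamma_{1}(11-30u_{0})$, verify the nondegeneracy condition \eqref{eq-inv}, and invoke Lemma \ref{lm-averag-1} for existence and Theorem \ref{thm-stab-PTW} for instability. The only substantive work you defer is the explicit evaluation of the averaged functions and of the Jacobian determinant (which in the paper reduces to checking $2\delta_{0}-c_{0}\gamma_{0}^{4}\neq 0$ via Lemma \ref{lm-FHP}), but you have correctly identified where that computation sits and what it must deliver.
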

\begin{proof}
We begin by proving the first statement by the averaging theory.
After translating  the equilibrium $(u_{0},v_{0},w_{0})$ of the 3D-FHN system \eqref{3D-FHN} to the origin,
we change the variables by
\begin{eqnarray} \label{chge-1}
(u,v,w)\to (\epsilon(u+\gamma_{0}w), \epsilon \mu_{0}v, \epsilon(\delta_{0}\gamma_{0}u-c_{0}\mu_{0}v+w)),
\end{eqnarray}
then the 3D-FHN system \eqref{3D-FHN} is converted into
\begin{eqnarray} \label{3D-FHN-1}
\begin{aligned}
\dot u  &= \mu_{0} v-\frac{1}{d_{0}\mu_{0}^{2}}(c_{0}g_{1}(u,w)+g_{2}(u,v,w))\epsilon+O(\epsilon^{2}),
\\
\dot v  &= -\mu_{0} u+\frac{1}{\mu_{0}}g_{1}(u,w)\epsilon+O(\epsilon^{2}),
\\
\dot w  &= \frac{1}{d_{0}\gamma_{0}\mu_{0}^{2}}(c_{0}g_{1}(u,w)+g_{2}(u,v,w))\epsilon+O(\epsilon^{2}),
\end{aligned}
\end{eqnarray}
where
\begin{eqnarray*}
g_{1}(u,w)\!\!\!&=&\!\!\!\frac{1}{d_{0}}(3u_{0}-1.1)(u+\gamma_{0}w)^{2},\\
g_{2}(u,v,w)\!\!\!&=&\!\!\!-\delta_{0}\gamma_{1}(\delta_{0}\gamma_{0}u-c_{0}\mu_{0}v+w).
\end{eqnarray*}
By another change of variables
\begin{eqnarray*}
(u,v,w)\to (r\cos\theta,r\sin\theta,w), \ \ \ \ \ r\geq 0,\ \ \ 2\pi \geq \theta\geq 0,
\end{eqnarray*}
we have
\begin{eqnarray} \label{3D-FHN-2}
\begin{aligned}
\dot r  &= \left(-\frac{1}{d_{0}\mu_{0}^{2}}(c_{0}g_{1}+g_{2})\cos\theta
      +\frac{1}{\mu_{0}}g_{1}\sin\theta\right)\epsilon+O(\epsilon^{2}),
\\
r\dot \theta  &= -\mu_{0} r+\left(\frac{1}{d_{0}\mu_{0}^{2}}(c_{0}g_{1}+g_{2})\sin\theta
      +\frac{1}{\mu_{0}}g_{1}\cos\theta\right)\epsilon+O(\epsilon^{2}),
\\
\dot w  &= \frac{1}{d_{0}\gamma_{0}\mu_{0}^{2}}(c_{0}g_{1}+g_{2})\epsilon+O(\epsilon^{2}),
\end{aligned}
\end{eqnarray}
where we write $g_{1}=g_{1}(r\cos\theta,w)$ and $g_{2}=g_{2}(r\cos\theta,r\sin\theta,w)$ for short.
Then for sufficiently small $|r|$ and $|\epsilon|$,
we change system (\ref{3D-FHN-1}) into  a nonautonomous system
\begin{eqnarray} \label{2D-FHN}
\begin{aligned}
\frac{d r}{d\theta} &=
      \left(\frac{1}{d_{0}\mu_{0}^{3}}(c_{0}g_{1}+g_{2})\cos\theta
      -\frac{1}{\mu_{0}^{2}}g_{1}\sin\theta\right)\epsilon+O(\epsilon^{2}):=H_{1}(\theta,r,w)\epsilon+O(\epsilon^{2}),
\\
\frac{d w}{d\theta}  &= -\frac{1}{d_{0}\gamma_{0}\mu_{0}^{3}}(c_{0}g_{1}+g_{2})\epsilon+O(\epsilon^{2})
   :=H_{2}(\theta,r,w)\epsilon+O(\epsilon^{2}).
\end{aligned}
\end{eqnarray}
Clearly, the functions $H_{j}(\cdot,r,w)$ are periodic functions of period $2\pi$.
In order to find the periodic solutions in system \eqref{2D-FHN},
we apply  the averaging theory and compute the averaged function of order one $(G_{1}(r,w),G_{2}(r,w))$,
which is obtained by the following integrals
\begin{eqnarray*}
G_{j}(r,w)=\int_{0}^{2\pi} H_{j}(\theta,r,w)\,d\theta, \ \ \ \ j=1,2.
\end{eqnarray*}
These yields
\begin{eqnarray*}
G_{1}(r,w)\!\!\!&=&\!\!\!\frac{\pi \gamma_{0}r}{d_{0}^{2}\mu_{0}^{3}}
      \left(2(3u_{0}-1.1)w+d_{0}\delta_{0}^{2}\gamma_{1}\right),\\
G_{2}(r,w)\!\!\!&=&\!\!\!-\frac{2\pi}{d_{0}\gamma_{0}\mu_{0}^{3}}
\left(\frac{c_{0}}{d_{0}}(3u_{0}-1.1)\left(\frac{1}{2}r^{2}+\gamma_{0}^{2}w^{2}\right)-\delta_{0}\gamma_{1}w\right).
\end{eqnarray*}
Solving $G_{1}(r,w)=0$ and $G_{2}(r,w)=0$ in the set $r>0$ yields that
\begin{eqnarray*}
r=r_{*}^{+}=\frac{d_{0}\gamma_{0}|\gamma_{1}|}{|3u_{0}-1.1|}\sqrt{|\frac{c_{0}\gamma_{0}^{4}-2\delta_{0}}{2c_{0}}|},\ \ \ \ \
w_{*}=\frac{5d_{0}\gamma_{0}^{2}\gamma_{1}}{11-30u_{0}},
\end{eqnarray*}
where $11-30u_{0}\neq 0$ can be checked by the proof of Lemma \ref{lm-FHP}.
Note that  $2\delta_{0}-c_{0}\gamma_{0}^{4}=0$
if and only if $\gamma_{0}=1/5^{1/3}<1$.
Then by Lemma \ref{lm-FHP}, we have $2\delta_{0}-c_{0}\gamma_{0}^{4}\neq 0$.
This implies
\begin{eqnarray*}
{\rm det}\left(
\begin{aligned}
\frac{\partial G_{1}}{\partial r} & \frac{\partial G_{1}}{\partial w}\\
\frac{\partial G_{2}}{\partial r} & \frac{\partial G_{2}}{\partial w}
\end{aligned}
\right)|_{(r_{*},w_{*})}=\frac{2\pi^{2}(2\delta_{0}-c_{0}\gamma_{0}^{4})\gamma_{0}^{2}\gamma_{1}^{2}}{d_{0}^{2}\mu_{0}^{6}}\neq 0.
\end{eqnarray*}
Thus by Lemma \ref{lm-averag-1},
for sufficiently small $\epsilon$ there exists an isolated periodic solution $\varphi(\theta,\epsilon)$ of period $2\pi$ in \eqref{2D-FHN}
such that $\varphi(0,\epsilon)\to (r_{*}^{+},w_{*})$ as $\epsilon\to 0$.
Consequently, by \eqref{chge-1} and \eqref{3D-FHN-2}, the first statement holds.
The second one is obtained by Theorem \ref{thm-stab-PTW}.
Therefore, the proof is now complete.
\end{proof}

\section{Concluding remarks}

We have studied the existence and stability of small-amplitude periodic traveling wave solutions
emerging from fold-Hopf equilibria in a general system of one reaction-diffusion equation coupled with one ordinary differential equation.
This system includes the FitzHugh-Nagumo system, caricature calcium models and other models in the real-world applications.
Under the assumption that the three-dimensional traveling wave system possesses a fold-Hopf equilibrium,
we transform it into a two-dimensional nonautonomous system,
and then apply the recent results on the averaging theory to
prove the existence of periodic solutions emerging from the fold-Hopf bifurcations.
Tsai, Zhang, Kirk and Sneyd \cite{Tsai-etal-2012} recently studied a simplified model of calcium dynamics (see \eqref{calcium-pde-0})
and numerically found that small-amplitude periodic waves  from fold-Hopf bifurcations are spectrally unstable.
In the present paper, we give a rigorous proof for the instability of these perturbed periodic waves.
We also point out  that the instability of these waves
 does not depend on whether fold-Hopf bifurcations are subcritical or supercritical
(see Theorem \ref{thm-stab-PTW}).

The preceding proof demonstrates the instability of  periodic waves from fold-Hopf bifurcation
by considering the perturbation problem for a single eigenvalue of the zero-amplitude case,
that is, the perturbation of $\lambda=A^{2}\mu_{0}^{2}$ for the linear operator $\mathcal{F}_{0}$.
This process only shows that there exists a certain unstable eigenvalue
for the related linearized operator associated with a perturbed periodic wave.
However, the spectrum of the linearization about a periodic waves could have a spectral bands contained in the unstable
half plane of complex values with positive real part (see \cite[Section 8.4]{Kapitula-Promislow-13}).
So it is also interesting to investigate the perturbation of spectral curves in the unstable half plane
and give the precise form of the unstable spectrum for the perturbed periodic waves.

Note that  we have only investigated periodic  waves arising from fold-Hopf bifurcations in the coupled system \eqref{PDE-ODE}.
As shown in the references \cite{Guck-Holmes-83,Kuznetsov-98},
there exist complex patterns arising near the fold-Hopf equilibria through bifurcation,
such as  heteroclinic orbits and torus.
So it is also interesting to further study the local stability of these perturbed patterns
from fold-Hopf bifurcation in the coupled system \eqref{PDE-ODE}.
By the similar method used in the present paper,
it is also possible to study the stability of small-amplitude periodic and quasi-periodic patterns arising in
other equations such as the classical and generalized Ginzburg-Landau equations \cite{Doelman-90,Duan-Holme-95}.

\section*{Acknowledgments}

The first author also thank Professor Haitao Xu from Huazhong University of Science and Technology for his valuable suggestions.
This work was partly supported by the National Natural Science Foundation of China (Grant No. 12101253),
the Scientific Research Foundation of CCNU (Grant No. 31101222044)
and the Science and Technology Project for Excellent Postdoctors of Hubei Province, China.

\section*{Author declarations}

\subsection*{Conflict of Interest}
The authors have no conflicts to disclose.

\subsection*{Data availability}
Data sharing is not applicable to this article as no data were created or analyzed in this study.

\bibliographystyle{amsplain}

\end{document}